\newif \iffig
\let\oldtocsection=\tocsection
\let\oldtocsubsection=\tocsubsection
\let\oldtocsubsubsection=\tocsubsubsection
\renewcommand{\tocsection}[2]{\hspace{0em}\oldtocsection{#1}{#2}}
\renewcommand{\tocsubsection}[2]{\hspace{1.75em}\oldtocsubsection{#1}{#2}}
\renewcommand{\tocsubsubsection}[2]{\hspace{2em}\oldtocsubsubsection{#1}{#2}}
\renewcommand{\paragraph}{%
\@startsection {paragraph}{4}
{\z@} \z@ {-\fontdimen 2\font }\bfseries
}
\renewcommand\section{\@startsection {section}{1}{\z@}%
                                   {-3.5ex \@plus -1ex \@minus -.2ex}%
                                   {2.3ex \@plus.2ex}%
                                   {\centering \normalfont\large\scshape}}%
\renewcommand{\subsection}
{\@startsection{subsection}{2}{\z@}%
                                     {-3.25ex\@plus -1ex \@minus -.2ex}%
                                     {1.5ex \@plus .2ex}%
                                     {\normalfont \scshape}}
\newif\ifdebug
\definecolor{darkred}{rgb}{0.4,0,0}
\definecolor{darkgreen}{rgb}{0,0.5,0}
\definecolor{darkblue}{rgb}{0,0,0.4}
\numberwithin{equation}{subsection}
\def\subsek~{\S{}}
\newtheoremstyle{mytheoremstyle} 
    {5pt}                    
    {5pt}                    
    {\itshape}                   
    {}                           
    {\bfseries}                   
    {}                          
    {.5em}                       
    {}  
\theoremstyle{mytheoremstyle}
\newtheorem{theorem}{Theorem}[section]
\newaliascnt{lemma}{theorem}  
\newaliascnt{proposition}{theorem}
\newtheorem{proposition}[proposition]{Proposition}
\newaliascnt{corollary}{theorem}  
\newaliascnt{exercise}{theorem}  
\newaliascnt{definition}{theorem}  
\newtheorem{definition}[definition]{Definition}  
\newaliascnt{remark}{theorem}  
\newtheorem{remark}[remark]{Remark}  
\newaliascnt{example}{theorem}  
\newtheorem{example}[example]{Example}  
\newcommand{\bmu}{\mbox{$\raisebox{-0.59ex}
  {$l$}\hspace{-0.18em}\mu\hspace{-0.88em}\raisebox{-0.98ex}{\scalebox{2}
  {$\color{white}.$}}\hspace{-0.416em}\raisebox{+0.88ex}
  {$\color{white}.$}\hspace{0.46em}$}{}}
\title[Families of K3 surfaces and Lyapunov exponents]{Families of K3 surfaces and Lyapunov~exponents}
\thanks{{Revised \textsc{\today}}}
\author{{ Simion Filip}}
\@date \else {\vskip3ex \centering\footnotesize\@date\par\vskip1ex}\fi
\else \@footnotetext{\@setdate}\fi}
\date{4 December 2014}
\address{
\parbox{0.5\textwidth}{
Department of Mathematics\\
University of Chicago\\
Chicago IL, 60615\\}
	}
\email{{sfilip@math.uchicago.edu}}
\begin{document}

\begin{abstract}
Consider a family of K3 surfaces over a hyperbolic curve (i.e. Riemann surface).
Their second cohomology groups form a local system, and we show that its top Lyapunov exponent is a rational number.
One proof uses the Kuga-Satake construction, which reduces the question to Hodge structures of weight $1$.
A second proof uses integration by parts.
The case of maximal Lyapunov exponent corresponds to modular families, given by the Kummer construction on a product of isogenous elliptic curves.
\end{abstract}

\maketitle

\tableofcontents


\ifdebug
  \listoffixmes
\fi

\section{Introduction}

Let $C$ be a hyperbolic Riemann surface and let $H\to C$ be a local system over $C$; this is the same as a linear representation of the fundamental group of $C$.
Pick a random point $x$ on $C$ and a random direction $\theta$ at $x$, and let $\gamma_T$ be the hyperbolic geodesic of length $T$ starting at $x$, in direction $\theta$, and of length $T$.
Connect the endpoint of $\gamma_T$ to $x$ to get a closed loop on $C$, and thus a monodromy matrix $M_{\gamma_T}$.
As $T$ grows, what can one say about the eigenvalues of $M_{\gamma_T}$?

The answer is given by the Oseledets theorem: there exist numbers
$$
\lambda_1\geq \cdots \geq \lambda_n
$$
such that the eigenvalues of $M_{\gamma_T}$ grow like $e^{\lambda_i T}$, as long as the starting point $x$ and the direction $\theta$ are chosen randomly (with respect to the natural measure).

The above numbers are called Lyapunov exponents and are not directly computable - they arise as the limit of a subadditive sequence.
Zorich \cite{Zorich_leaves} discovered experimentally, and then Kontsevich \cite{Kontsevich} explained that when the local system comes from a variation of Hodge structure of weight $1$, the sum of positive exponents is rational.
Forni \cite{Forni} developed and detailed this analysis further.

The method has been extended by Eskin-Kontsevich-Zorich \cite{EKZ} to apply to the more general setting of \Teichmuller dynamics, but still in the context of weight $1$ variations of Hodge structure.
More recently, the work of Yu \cite{FeiYu} makes a conjecture about a relationship between the Lyapunov exponents and the Harder-Narasimhan filtration of the variation of Hodge structures.
In analogy with the situation in crystalline cohomology, it is conjectured that a polygon constructed from Lyapunov exponents lies above a polygon constructed from the Harder-Narasimhan filtration.

An extension of the method, when the base is not a Riemann surface but rather a ball quotient, was done by Kappes and M\"{o}ller \cite{Kappes_Moller}.
It applies to weight $1$ variations and allows them to distinguish commensurability classes of lattices in $\SU(n,1)$.
McMullen \cite{McMullen} related their computations with volumes of complex-hyperbolic manifolds.

This paper investigates the case of weight $2$ variations of Hodge structure of K3 type.
In this case, the Hodge numbers are $(1,n,1)$ and the monodromy preserves an indefinite quadratic form.
The Lyapunov exponents then have to be of the form
$$
\lambda_1\geq \lambda_2 \geq 0 \geq\cdots\geq 0 \geq -\lambda_2 \geq -\lambda_1
$$
The main result of the paper (see \autoref{thm:top_exp}) is that $\lambda_1$ is rational.

\begin{theorem}
 Let $H\to C\setminus S$ be a polarized variation of Hodge structure of K3 type over a compact Riemann surface $C$ with finitely many punctures $S$.
 
 Then the top Lyapunov exponent of the local system for the geodesic flow is rational and given by the formula
 \begin{align}
 \label{eqn:formula}
  \lambda_1 = \frac 12 \frac{\deg H^{2,0}}{\deg K_C(\log S) }
 \end{align} 
 Here $H^{2,0}\subset H_\bC$ is the holomorphic line bundle describing the Hodge structure and $K_C(\log S)$ is the logarithmic canonical bundle of $(C,S)$.
 
 We also have the apriori bound
 $$\lambda_1 \leq \frac 12$$
 The equality case corresponds to special (i.e. modular) families and is discussed in \autoref{subsec:max_exp}.
\end{theorem}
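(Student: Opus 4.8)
My plan is to treat \eqref{eqn:formula} as the fundamental assertion and to extract both rationality and the a priori bound from it. The two quantities on the right are degrees of line bundles on the \emph{compact} curve $C$: the number $\deg K_C(\log S) = 2g-2+|S|$ is a positive integer, while the Hodge line $H^{2,0}$ admits a canonical (Deligne) extension across the punctures whose degree is a rational number determined by the residues of the local monodromy. Granting \eqref{eqn:formula}, the exponent $\lambda_1$ is a ratio of two rationals, hence rational. For the a priori bound $\lambda_1 \le \tfrac12$ I would argue independently, via the Schwarz--Ahlfors--Pick lemma: the period map $C \to \Gamma\backslash O(2,n)/(O(2)\times O(n))$ is holomorphic, so after normalizing by holomorphic sectional curvature its derivative cocycle is dominated by the tautological one, whose top exponent for this weight $2$ domain is $\tfrac12$; equality forces the period map to be a totally geodesic isometric embedding, which already identifies the extremal case with the modular families of \autoref{subsec:max_exp}. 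The same bound can be read off \eqref{eqn:formula} together with the Arakelov-type estimate $\deg H^{2,0} \le \deg K_C(\log S)$, which comes from boundedness of the Higgs field $\sigma\colon H^{2,0}\to H^{1,1}\otimes \Omega^1_C(\log S)$ supplied by Griffiths transversality.

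\textbf{The integration-by-parts computation.} To prove \eqref{eqn:formula} I would run the machinery of Kontsevich, Forni, and Eskin--Kontsevich--Zorich \cite{Kontsevich,Forni,EKZ} directly on the weight $2$ system. Pull the variation back to the unit tangent bundle $T^1 C = \Gamma\backslash\mathrm{PSL}_2(\mathbb{R})$ and realize $\lambda_1$, through Oseledets, as the growth rate of the Hodge norm of a generic flat vector along the geodesic flow. Differentiating $\log\|\cdot\|$ along the flow produces a function on $T^1 C$ whose only off-diagonal contribution comes from $\sigma$ acting out of the top piece $H^{2,0}$. Replacing the time average by the space average against the $\mathrm{PSL}_2(\mathbb{R})$-invariant measure, and then integrating out the circle direction of $T^1 C$, is where the factor $\tfrac12$ is born, as the angular average of $\cos^2$. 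What remains is a curvature integral: the Chern form of $H^{2,0}$ is $\sigma\wedge\sigma^{*}$ up to a positive constant, so the space average collapses to $\tfrac12 \bigl(\int_C c_1(H^{2,0})\bigr)/\mathrm{Area}(C)$, and Gauss--Bonnet rewrites $\mathrm{Area}(C) = 2\pi\deg K_C(\log S)$, producing exactly \eqref{eqn:formula}.

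\textbf{The crux: equality, not merely an inequality.} The genuine difficulty is that the curvature computation by itself only controls $\lambda_1$ up to a defect term measuring the failure of the fastest-growing Oseledets direction to align with the line $H^{2,0}$; the content of the theorem is that this defect vanishes identically. This is the step I expect to be hardest, since for the full (rank $g$) Hodge bundle the analogous defect vanishes automatically as a total derivative, whereas here $H^{2,0}$ is only a rank-one piece of the filtration. The cleanest way to close the gap is the alternative route through the Kuga--Satake construction: the spin representation of $\mathrm{Spin}(2,n)$ realizes the weight $2$ variation inside a \emph{weight one} variation whose Lyapunov spectrum is $\tfrac12(\pm\lambda_1\pm\lambda_2)$ together with zeros; applying the established weight $1$ theorem \cite{EKZ}, where the corresponding equality is known, and reading off the top combination pins $\lambda_1$ to a rational multiple of degrees with no loss. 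In either approach one must simultaneously control the punctures: Schmid's nilpotent orbit theorem provides the log-integrability needed for the ergodic averages to converge, and shows that the boundary contributions discarded in the integration by parts are precisely what promotes $H^{2,0}$ and $\Omega^1_C$ to their logarithmic extensions, so that it is $\deg K_C(\log S)$ rather than a naive degree that appears.

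\textbf{The equality case.} Finally, equality $\lambda_1 = \tfrac12$ holds exactly when $\deg H^{2,0} = \deg K_C(\log S)$, that is, when $\sigma$ saturates the curvature bound at every point. Geometrically this says the period map is a totally geodesic holomorphic embedding of $C$ into a sub-disk of the type IV domain, which is the modular situation treated in \autoref{subsec:max_exp} and realized by the Kummer construction on a product of isogenous elliptic curves. I expect the main obstacle throughout to be the vanishing of the defect in the third paragraph, with the analytic control at the cusps a close second.
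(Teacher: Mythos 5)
Your completed argument is, in substance, the paper's first proof: pass to the Kuga--Satake weight $1$ variation, apply the Eskin--Kontsevich--Zorich sum formula there, and use the fact that $\lambda_2$ cancels in the sum of positive exponents. However, two concrete points in this route are respectively wrong and missing, and both are needed to land on \eqref{eqn:formula} rather than on ``some rational multiple of degrees''. First, the Kuga--Satake spectrum contains \emph{no} zero exponents: the restricted weights of the spin representation of $\so_{2,n}$ are $\pm\frac 12 (f_1\pm f_2)$, so the exponents $\pm\frac 12(\lambda_1\pm\lambda_2)$ each occur with multiplicity $2^{n-1}$, exactly filling the rank $2^{n+1}$ (\autoref{prop:odd_spin}, \autoref{prop:even_spin}); your ``together with zeros'' would corrupt this multiplicity count and with it the constant in the final formula. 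Second, you never convert $\deg \det \operatorname{KS}(H)^{1,0}$ back into $\deg H^{2,0}$; this requires the isomorphism $\left(H^{2,0}\right)^{\otimes 2^{n-1}}\cong \det \operatorname{KS}(H)^{1,0}$ of \autoref{thm:KS}, which comes from the Clifford-multiplication map $\omega\otimes\alpha\mapsto \omega\cdot\alpha\cdot v_0$ identifying $H^{2,0}\otimes\left(\Cl^{1,0}_+\right)^\dual$ with $\Cl^{1,0}_+$ (see \autoref{eqn:deg_Hg}). With multiplicity $2^{n-1}$ and this degree relation, EKZ gives $2^{n-1}\lambda_1=\frac 12\cdot 2^{n-1}\deg H^{2,0}/\deg K_C(\log S)$, which is \eqref{eqn:formula}; without them the KS route proves only rationality.

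On the direct route, you correctly isolate the crux --- the alignment ``defect'' --- but you then abandon it, whereas the paper's second proof eliminates the defect by a device worth knowing: instead of a generic flat vector, take a \emph{real isotropic} flat section $\phi$ on the universal cover $\bD$. Isotropy and reality force $\norm{\phi}^2=4\norm{\phi^{0,2}}^2$ pointwise, so the Hodge norm of $\phi$ grows exactly like the norm of its $(0,2)$-projection; and $\phi^{0,2}$ is a holomorphic section of the \emph{line} bundle $H^{0,2}$, for which $\del\delbar\log\norm{\phi^{0,2}}^2$ equals minus the curvature form with no section-dependent remainder. This kills the defect identically, leaving Green's theorem on hyperbolic discs, the limit $\Vol_\hyp(\bD_t)/l_t\to 1$, and Mumford's good-metric argument at the cusps (the role you assigned to the nilpotent orbit theorem). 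The spectral gap $\lambda_1>\lambda_2$ --- obtained from Eskin--Matheus, or from the absence of zero exponents in the weight $1$ KS family --- is what guarantees that \emph{every} isotropic vector realizes $\lambda_1$ after averaging over the circle of directions, so that this special section computes the top exponent. Your treatment of the a priori bound and of the equality case matches the paper's use of the Arakelov inequality and of the Sun--Tan--Zuo classification.
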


The standard examples come from families of K3 surfaces over a complex curve (i.e. Riemann surface).
The local system $H$ consists of the second cohomology group of the surfaces in the family.
Note that a K3 surface is complex $2$-dimensional, so has $4$ real dimensions.
\autoref{sec:examples} works out a number of classical families.

\begin{remark}
 In \Teichmuller dynamics, the variation always has a rank $2$ direct summand whose top exponent\footnote{With the normalizations of this paper, one should say that the top exponent~is~$\frac 14$} is $1$, and the other one necessarily $-1$.
 This summand gives the uniformization of the hyperbolic metric and accounts for maximal growth rate.
 
 This is not the case for families of K3 surfaces.
 The families of K3s with largest top exponent are described in \autoref{subsec:max_exp}.
\end{remark}

\paragraph{Normalizations.}
The formula in \autoref{eqn:formula} assumes that the hyperbolic metric has curvature $-1$ and the geodesic flow has unit speed.
This is in contrast with \cite{EKZ} where the curvature is $-4$.

\paragraph{Outline.}
\autoref{sec:Prelims_Hg} reviews basic facts from Hodge theory.
It also contains a discussion of the Kuga-Satake construction, following Deligne \cite{Deligne_K3}.

\autoref{sec:Prelims_Dyn} reviews the background from dynamics.
It also contains some calculations with spin representations, necessary for applying the Kuga-Satake construction.

\autoref{sec:top_exp} computes the top Lyapunov exponent in two different ways.
The first method uses the Kuga-Satake construction to reduce the question to weight $1$ variations.
The second method is direct and uses integration by parts, similar to the weight $1$ case.

\autoref{sec:examples} contains a number of classical examples to which the main theorem applies.
The case of maximal exponent $\lambda_1=\frac 12$ and its geometric meaning is discussed in \autoref{subsec:max_exp}.

\paragraph{Some remarks.}
The connection with dynamics on individual K3 surfaces is not clear at the moment.
For this topic, the work of Cantat \cite{Cantat} and McMullen \cite{McMullen_Siegel} can serve as an introduction.

\paragraph{Acknowledgments.}
I am grateful to Alex Eskin, Maxim Kontsevich, Martin M\"{o}ller, and Anton Zorich for conversations on this subject.
I am also grateful to Martin M\"{o}ller for doing a numerical experiment which confirmed the rationality result of this paper.
I have benefited also from conversations with Daniel Huybrechts, who in particular pointed out \cite{Maulik}.

\section{Preliminaries from Hodge theory}
\label{sec:Prelims_Hg}

This section contains some background on Hodge theory and the Kuga-Satake construction.
Some basic definitions are in \autoref{subsec:Hg_basics}, followed by a description of the Kuga-Satake construction in \autoref{subsec:KS}.
The Kuga-Satake construction in families is discussed in \autoref{subsec:KS_fam}.

\subsection{Hodge structures and K3 surfaces}
\label{subsec:Hg_basics}

This section recalls the definition of Hodge structures and discusses some examples.
The definitions follow Deligne \cite{Deligne_travaux}, but see also \cite{PerDom} for a leisurely introduction.

Denote by $\bS$ the real algebraic group whose $\bR$-points are $\bC^\times$.
The real-algebraic structure on $\bC^\times$ is from its embedding as subgroup of $2\times 2$ real matrices.
It arises via the natural action on $\bC=\bR^2$.

\begin{definition}
\label{def:Hg_str}
 A \emph{Hodge structure of weight $w$} on a real vector space $H_\bR$ is a decomposition of its complexification $H_\bC$ as
 $$
 H_\bC = \bigoplus_{p+q=w} H^{p,q}
 $$
 such that $H^{p,q}=\conj{H^{q,p}}$.
 The \emph{Hodge filtration} on $H_\bC$ is defined by
 $$
 F^p H_\bC = \bigoplus_{i\geq p} H^{i,q}
 $$
 It is a decreasing filtration:
 $$
 H_\bC \supseteq \cdots \supseteq F^{p-1}\supseteq F^p \supseteq \cdots \supseteq \{0\}
 $$
 The filtration also determines the Hodge decomposition via
 $$
 H^{p,q}:=F^p\cap \conj{F^{q}}
 $$
 Let any $z\in \bC^\times$ act on $H^{p,q}$ by $z^p\conj{z}^q$.
 Since complex conjugation exchanges the $(p,q)$ and $(q,p)$ components of $H_\bC$ this action descends to a homomorphism $h:\bS \to \GL(H_\bR)$ of real algebraic groups.
  
 Equivalently, a Hodge structure on $H_\bR$ is the same as a representation $h:\bS \to \GL(H_\bR)$.
 The weight is $w$ if elements $x\in\bR^\times\subset \bS$ act by $h(x)=x^w$.
 
 The \emph{Weil operator} $C$ is defined as $C:=h(\sqrt{-1})$ and it is a real operator.
 On $H^{p,q}$ it acts by $\sqrt{-1}^{p-q}$.
\end{definition}

\begin{definition}
 A polarization of a Hodge structure on $H_\bR$ of weight $w$ is a bilinear form $I(-,-)$ on $H_\bR$ such that:
 \begin{itemize}
  \item It is non-degenerate and $(-1)^w$-symmetric (skew-symmetric for odd weight, symmetric for even weight).
  \item The orthogonal of $F^p$ with respect to $I$ is $F^{w+1-p}$.
  \item The bilinear form $Q(x,y):=I(Cx,\conj{y})$ on $H_\bC$ is hermitian and positive-definite.
 \end{itemize}
 Equivalently, the representation $h:\bS\to \GL(H_\bR)$ factors through $\GL(H_\bR,I)$, the linear transformations that preserve $I$ up to scaling.
 The form $Q(x,y):=I(Cx,\conj{y})$ must again be hermitian and positive-definite.
 
 We call $I(-,-)$ the indefinite form and $Q(-,-)$ the definite form.
\end{definition}

\begin{definition}
 An \emph{integral Hodge structure} of weight $w$ is a free $\bZ$-module $H_\bZ$ with a real Hodge structure of weight $w$ on $H_\bR:=H_\bZ\otimes_\bZ \bR$.
 A polarization is a an integer-valued bilinear form $I$ on $H_\bZ$ which induces a polarization on $H_\bR$.
\end{definition}

\begin{example} See \cite[Ch. 0]{GH} for an introduction to these ideas.
 \begin{enumerate}
 \item The primitive cohomology of a projective algebraic variety $X$ carries a Hodge structure of corresponding weight.
 Recall that in the projective case, we have the class of some hyperplane section $[D]\in H^2(X;\bZ)\cap H^{1,1}$.
 A class in $H^k(X;\bC)$ is \emph{primitive} if the cup product with $[D]^{n-k+1}$ is trivial (here $n=\dim_\bC X$).
 The notion of primitive class depends on the choice of $[D]$, i.e. of projective embedding.
 
  \item If $A$ is an abelian variety, then $H^1(A;\bZ)$ carries a polarisable weight $1$ Hodge structure.
  In this case
  $$
  H^1(A;\bC) = H^{1,0}\oplus H^{0,1}
  $$
  Given the class of an ample divisor $[D]\in H^2(A;\bZ)$ a polarization is given by $I(x,y)=x\cdot y \cdot [D]^n$, where $n=\dim_\bC A - 1$.
  \end{enumerate}
\end{example}

A comprehensive introduction to the geometry of K3 surfaces is in the collected seminar notes \cite{K3}.
A more recent account is in the upcoming book of Huybrechts \cite{Huybrechts_K3}.

\begin{example}
  If $X$ is a projective K3 surface, then its primitive second cohomology $H^2(X;\bQ)_0$ carries a polarized Hodge structure of weight $2$.
  Recall that a $K3$ surface is a complex surface, so a real $4$-manifold.
  In particular, cup product defines a symmetric (even, unimodular) bilinear pairing on $H^2$.
  
  Since $X$ is projective, it has a class $[D]\in H^2(X;\bZ)\cap H^{1,1}$ corresponding to a hyperplane section (as well as to a \Kahler form).
  Primitive cohomology is the orthogonal complement of $[D]$ for cup product, denoted $H^2(X)_0$.
  Its Hodge decomposition over $\bC$ is  
  $$
  H^2(X;\bC)_0 = H^{2,0} \oplus H^{1,1} \oplus H^{0,2}
  $$
  where $\dim_\bC H^{2,0}=1$ and $\dim_\bC H^{1,1}=19$. 
  Cup product gives a natural polarization, i.e. a quadratic form on $H^2(X;\bR)_0$ of signature $(2+,19-)$.
\end{example}

\begin{definition}
 A Hodge structure of weight $2$ is \emph{of K3 type} if 
 $$
 \dim_\bC H^{2,0}=\dim_\bC H^{0,2}=1
 $$
 and no other weights except $H^{1,1}$ appear in the Hodge decomposition.
 If it is polarized, then the quadratic form has signature $(2+,n-)$.
\end{definition}

\begin{remark}
 Let $H_\bZ$ be a Hodge structure of weight $1$, with $H_\bZ$ of rank $4$ over $\bZ$.
 Then the second exterior power $\Wedge^2 H_\bZ$ naturally carries a Hodge structure of K3 type.
 
 When applied to local systems, the top exponent of $\Wedge^2 H$ will equal the sum of the positive exponents of $H$.
 This recovers the classical rationality result for the sum of exponents in this particular case.
\end{remark}

\subsection{The Kuga-Satake construction}
\label{subsec:KS}

Let $(H,I)$ be a polarized Hodge structure of K3 type.
As explained in \autoref{subsec:Hg_basics}, this gives a representation 
$$
h:\bS\to \GL(H,I)
$$
The group $\GL(H,I)$ in an orthogonal group (including scalings) and acts in the standard representation on $H$.
The representation $h$ lifts to the Clifford group $\tilde{h}:\bS\to \CSpin(H,I)$.
A spin representation of $\CSpin(H,I)$ on a real vector space endows it with a Hodge structure, via the map $\tilde{h}$.
A computation shows it is of weight $1$.

Below are the details of this construction, introduced by Kuga and Satake \cite{KS}.
A detailed exposition is available in the notes of Huybrechts \cite[Ch. 4]{Huybrechts_K3} (see also \cite[Sect. 3]{Deligne_K3})

\paragraph{Clifford algebras.}
Consider an integral Hodge structure $(H_\bZ,I)$ of K3 type.
The Clifford algebra can be defined with any coefficients (i.e. $\bZ,\bR$ or $\bC$) and only depends on the quadratic form $I$, not the Hodge structure.
Using generators and relations, it is
$$
\Cl(H) :=\left.\left( \bigoplus_{n\geq 0} H^{\otimes n} \right)\middle/ \{ v\otimes v = I(v,v)\}\right.
$$
It decomposes canonically into elements with an odd and even number of terms $\Cl(H) = \Cl_+ H \oplus \Cl_- H$.
The algebra also carries an anti-involution (called transposition) defined by
$$
(v_1\cdots v_n)^t := v_n\cdots v_1
$$
\paragraph{Clifford and spin groups.}
The Clifford group is the subset of invertible even elements which preserve $H\subset \Cl(H)$ under conjugation:
$$
\CSpin(H):=\{x\in \Cl^\times_+ \vert xHx^{-1}=H\}
$$
The conjugation action on $H$ defines the standard representation of $\CSpin$, with image the orthgonal group of $H$:
$$
\rho_{std}:\CSpin(H)\to \operatorname{O}(H)
$$
The kernel of this representation consists of scalars.

The left action of $\CSpin$ on $\Cl_+$ defines another representation
\begin{align*}
 \rho_{spin}:\CSpin(H) &\to \GL(\Cl_+)\\
 \rho_{spin}(x)v &= x\cdot v 
\end{align*}
This action is not irreducible, and in fact decomposes into several copies of a spin representation $W$.
Depending on the parity of $\dim H$, the representation $W$ might decompose further.
A more detailed analysis is in \autoref{subsec:spin_reps}.

The spin group is the subgroup of unit norm elements:
$$
\Spin(H):= \{x\in \CSpin\vert x\cdot x^t = 1\}
$$
The representations $\rho_{std}$ and $\rho_{spin}$ restrict to the spin group.

\paragraph{Complex structure.}
Recall that $H$ carries a Hodge structure of K3 type, i.e. a decomposition
$$
H_\bC=H^{2,0}\oplus H^{1,1} \oplus H^{0,2}
$$
Define the real $2$-dimensional plane $P=H_\bR \cap \left(H^{2,0}\oplus H^{0,2}\right)$, on which $I$ is positive-definite.

Choose a positively oriented orthonormal basis $e_1,e_2$ of $P$.
The orientation is fixed by requiring that $\omega:=e_1+\sqrt{-1}e_2$ spans $H^{2,0}$.
Note that with the current sign conventions we have $e_1^2=e_2^2=1$ inside $\Cl(H)$.

Define the element $J_P := e_1 e_2\in \CSpin(H_\bR)\subset \Cl_+(H_\bR)$, and note it is independent of the choice of orthonormal basis $\{e_1,e_2\}$.
It satisfies the identity
$$
J_P^2 = e_1 e_2 e_1 e_2 = - e_1 e_1 e_2 e_2 = -1
$$
In particular, $J_P^{-1}=-J_P$.

Using the representation $\rho_{std}$ of $\CSpin$ on $H$, the element $\rho_{std}(J_P)$ induces the action of the Weil operator in the Hodge decomposition.
Indeed, on $H^{1,1}$ the action is trivial since $J_P$ will commute with any element there.
On $P=\left(H^{2,0}\oplus H^{0,2}\right)\cap H_\bR$ the action is of rotation by $\pi/2$, so induces the usual complex structure.

Using now the representation $\rho_{spin}$, the action of $\rho_{spin}(J_P)$ on $\Cl_+(H_\bR)$ is by left multiplication.
It induces a new complex structure, and so can be viewed as a Hodge structure of weight $1$.

\newcommand{\KS}{{\operatorname{KS}}}

\begin{definition}
 The \emph{Kuga-Satake Hodge structure of weight $1$} associated to $(H,I)$ is the $\bZ$-module $\Cl_+(H_\bZ)$, with complex structure on $\Cl_+(H_\bR)$ given by the operator $J_P$ defined above.
 It is denoted $\KS(H,I)$.
\end{definition}

\begin{proposition}
\label{prop:Cl10}
 With notation as above, the space $\Cl^{1,0}$ consists of all multiples of $\omega:=e_1 +\sqrt{-1} e_2$.
 It also consists of elements $\alpha\in \Cl(H_\bC)$ such that $\omega\cdot \alpha =0$.
\end{proposition}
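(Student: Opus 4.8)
The plan is to reduce everything to four quadratic identities in the Clifford algebra and then recognize the projection onto $\Cl^{1,0}$ as left multiplication by an explicit idempotent. Writing $\conj{\omega} := e_1 - \sqrt{-1}\,e_2$ for the conjugate of $\omega$ (which spans $H^{0,2}$), I would first record, using only $e_1^2=e_2^2=1$, $e_1e_2 = -e_2e_1$ and $J_P = e_1e_2$, the relations
\[
\omega^2 = \conj{\omega}^2 = 0, \qquad \omega\conj{\omega} = 2\bigl(1 - \sqrt{-1}\,J_P\bigr), \qquad \conj{\omega}\,\omega = 2\bigl(1 + \sqrt{-1}\,J_P\bigr),
\]
so in particular $\omega\conj{\omega} + \conj{\omega}\,\omega = 4$. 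A second two-line computation gives the eigenrelation $J_P\,\omega = \sqrt{-1}\,\omega$ (and symmetrically $J_P\,\conj{\omega} = -\sqrt{-1}\,\conj{\omega}$).

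Next I would fix the meaning of $\Cl^{1,0}$. Since the Kuga-Satake complex structure is $\rho_{spin}(J_P)$, i.e.\ left multiplication $L_{J_P}$, and $J_P^2 = -1$, this is a genuine complex structure; by the weight-$1$ sign convention (the Weil operator acts by $\sqrt{-1}$ on the $(1,0)$-part), $\Cl^{1,0}$ is precisely the $+\sqrt{-1}$-eigenspace of $L_{J_P}$ on $\Cl(H_\bC)$. The eigenrelation $J_P\,\omega = \sqrt{-1}\,\omega$ together with associativity then gives the easy inclusion: for any $\gamma$, $L_{J_P}(\omega\gamma) = (J_P\omega)\gamma = \sqrt{-1}\,\omega\gamma$, so every right multiple of $\omega$ lies in $\Cl^{1,0}$, i.e.\ $\omega\cdot\Cl(H_\bC)\subseteq \Cl^{1,0}$.

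The heart of the argument is the reverse inclusion, which I would obtain by producing the spectral projection onto $\Cl^{1,0}$ explicitly. For an operator $T$ with $T^2=-1$, the projection onto its $+\sqrt{-1}$-eigenspace is $\tfrac12(1 - \sqrt{-1}\,T)$; applying this with $T = L_{J_P}$ and using $\tfrac14\omega\conj{\omega} = \tfrac12(1-\sqrt{-1}\,J_P)$ from the relations above shows that left multiplication by $p_+ := \tfrac14\omega\conj{\omega}$ \emph{is} the projection onto $\Cl^{1,0}$. Hence $\Cl^{1,0} = \operatorname{im}(L_{p_+}) = \tfrac14\,\omega\conj{\omega}\cdot\Cl(H_\bC)\subseteq \omega\cdot\Cl(H_\bC)$, and combined with the previous paragraph this yields the first assertion $\Cl^{1,0} = \omega\cdot\Cl(H_\bC)$.

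Finally, for the description as $\{\alpha : \omega\alpha = 0\}$: because $\omega^2=0$, each right multiple $\omega\gamma$ satisfies $\omega(\omega\gamma)=0$, so $\Cl^{1,0}=\omega\cdot\Cl(H_\bC)$ is contained in this annihilator. Conversely, setting $p_- := \tfrac14\conj{\omega}\,\omega$, the relation $p_+ + p_- = 1$ gives $L_{p_+} + L_{p_-} = \operatorname{id}$; if $\omega\alpha=0$ then $L_{p_-}\alpha = \tfrac14\conj{\omega}(\omega\alpha) = 0$, so $\alpha = L_{p_+}\alpha \in \Cl^{1,0}$. This closes both characterizations. The only delicate points are bookkeeping ones — the sign conventions $e_i^2 = +1$ and the orientation making $\omega$ span $H^{2,0}$, and the fact that left and right multiplication commute — while the one genuine idea, recognizing $\tfrac14\omega\conj{\omega}$ as the spectral projection, makes any dimension count unnecessary.
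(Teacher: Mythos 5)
Your proof is correct and is essentially the paper's own argument: the same Clifford identities, the same easy inclusion via $J_P\,\omega=\sqrt{-1}\,\omega$, and the same key observation that left multiplication by $\tfrac14\omega\conj{\omega}=\tfrac12\bigl(1-\sqrt{-1}\,J_P\bigr)$ is the spectral projection onto $\Cl^{1,0}$, which gives $\Cl^{1,0}=\omega\cdot\Cl(H_\bC)$. The only minor divergence is the final step: the paper identifies $\Cl^{1,0}$ with the annihilator of $\omega$ by a dimension count ($\omega^2=0$ puts the image of left multiplication by $\omega$ inside its kernel, and both are forced to be half-dimensional), whereas you use the complementary idempotent $p_-=\tfrac14\conj{\omega}\,\omega$ and the identity $p_++p_-=1$ — a slightly cleaner, purely algebraic finish that makes the dimension count unnecessary, exactly as you claim.
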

\begin{proof}
 Inside the Clifford algebra we have the identities (where $\conj{\omega}=e_1-\sqrt{-1}e_2$)
 \begin{align*}
 J_P\cdot \omega &= \sqrt{-1}\omega\\ 
 \omega\cdot \omega& = 0\\
 \omega\cdot \conj{\omega} & = 2(1-\sqrt{-1}J_P)
 \end{align*} 
 The first calculation is
 $$
 J_P \cdot \omega = e_1 e_2 (e_1+\sqrt{-1}e_2) = -(e_1 e_1) e_2 + \sqrt{-1} e_1 = \sqrt{-1}\omega
 $$
 and the other identities follow similarly.
 In particular, for any $\alpha$ we have $J_P(\omega\alpha) = \sqrt{-1}\omega \alpha$.
 So multiples of $\omega$ are inside $\Cl^{1,0}$.
 
 Next, multiplication by $\omega\cdot \conj{\omega}$ acts by $+4$ on $\Cl^{1,0}$ and by $0$ on $\Cl^{0,1}$.
 So for any $\alpha$ we see that $\frac 14 \omega \conj{\omega}\alpha$ is the $(1,0)$ component of $\alpha$.
 In particular, any element of $\Cl^{1,0}$ is a multiple of $\omega$.
 
 Finally, from $\omega\cdot \omega =0$ we find that multiples of $\omega$ are at most half the dimension of $\Cl$.
 Because $\Cl^{1,0}$ agrees with the multiples of $\omega$, we find that it must also equal the set of elements annihilated by $\omega$.
\end{proof}

\begin{remark}
 The odd part of the Clifford algebra $\Cl_-(H)$ also carries a weight $1$ Hodge structure, non-canonically isogenous to $\Cl_+(H)$.
 Choose $v\in H_\bZ$ such that $I(v,v)\neq 0$.
 Then multiplication by $v$ on the right in $\Cl(H)$ exchanges the odd and even components, and commutes with left multiplication by $J_P$.
 Multiplying by $v$ on the right twice is just scalar multiplication by $I(v,v)$.
 Therefore, any choice of $v$ will provide an isogeny.
\end{remark}

\paragraph{Polarizations.}
The complex torus $\KS(H,I)$ constructed above is not naturally polarized.
However, a polarization can be constructed as follows.

For $\alpha\in \Cl(H)$ define $\tr(\alpha)$ to be the trace of the operator of left multiplication by $\alpha$ in $\Cl(H)$.
To define the polarization, pick orthogonal vectors $f_1,f_2\in H_\bZ$ such that $I(f_i,f_i)>0$.

\begin{proposition}
\label{prop:polarizations}
 Recall that $x\mapsto x^t$ denotes the anti-involution of the Clifford algebra.
 On $\Cl_+(H)$ define the bilinear form
 $$
 I_\KS (x,y):=\pm \tr (f_1 f_2 \cdot x^t \cdot y)
 $$ 
 This gives an integral polarization of the weight $1$ Kuga-Satake Hodge structure on $\Cl_+(H)$.
 
 The sign in the definition is determined as follows.
 The possible polarizations (up to sign, and not necessarily integral) form two connected components.
 Choosing $f_1=\pm e_1, f_2=e_2$ in the definition above gives representatives in each component, and the sign is chosen to make $I_\KS$ positive.
 Here $e_1,e_2$ are the vectors used to define $J_P=e_1 e_2$ earlier.
\end{proposition}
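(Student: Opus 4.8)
The plan is to verify directly the three defining properties of a polarization of a weight $1$ Hodge structure --- skew-symmetry together with non-degeneracy, the orthogonality relation $(F^1)^\perp=F^1$, and positivity of $Q(x,y)=I_\KS(Cx,\conj{y})$ where $C=\rho_{spin}(J_P)$ is the Weil operator --- as well as integrality. The key computational tool throughout is the formula
\[
\tr(\alpha)=2^{\dim H}\,[\alpha]_0,
\]
where $[\alpha]_0$ denotes the scalar (degree-zero) part of $\alpha$: in the monomial basis $e_S$ of $\Cl(H)$, left multiplication by $e_S$ fixes a basis vector only when $S=\emptyset$. This reduces every trace to an extraction of scalar parts, yields integrality at once (for integral $f_1,f_2,x,y$), and gives the cyclicity $[\alpha\beta]_0=[\beta\alpha]_0$ together with the invariance $[\alpha^t]_0=[\alpha]_0$.

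Skew-symmetry is then formal: since $(x^t y)^t=y^t x$, $(f_1 f_2)^t=f_2 f_1=-f_1 f_2$ by orthogonality of $f_1,f_2$, and $[\,\cdot\,]_0$ is transposition- and cyclic-invariant, one gets $\tr(f_1 f_2\, x^t y)=-\tr(f_1 f_2\, y^t x)$. For the orthogonality relation, recall from \autoref{prop:Cl10} that $F^1=\Cl^{1,0}\cap\Cl_+=\omega\cdot\Cl_-(H_\bC)$; since $\omega^t=\omega$ and $\omega\cdot\omega=0$, the computation $I_\KS(\omega\beta,\omega\gamma)=\pm\tr(f_1 f_2\,\beta^t\,\omega^t\omega\,\gamma)=0$ shows $F^1$ is isotropic, and as it has half the dimension of $\Cl_+(H_\bC)$ while $I_\KS$ is non-degenerate (by the positivity below), it is Lagrangian, i.e. $(F^1)^\perp=F^1$. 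That $C$ is an $I_\KS$-isometry, hence $Q$ is Hermitian, follows from $(J_P x)^t=-x^t J_P$ and $J_P^2=-1$, independently of the choice of $f_1,f_2$.

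The main obstacle is positivity, where the two-component and sign issues enter. I first reduce to the representative $f_1=e_1,f_2=e_2$, so that $f_1 f_2=J_P$. A short manipulation using $J_P a J_P=-J_P a J_P^{-1}$ rewrites
\[
Q(x,x)=\pm\,2^{\dim H}\,\big[\,\operatorname{Ad}_{J_P}(x^t)\,\conj{x}\,\big]_0,
\]
so the heart is the symmetric bilinear form $\langle a,b\rangle:=[\operatorname{Ad}_{J_P}(a^t)\,b]_0$. Now $\operatorname{Ad}_{J_P}$ is the reflection that negates $e_1,e_2$ and fixes the negative directions $e_j$ ($j\geq 3$), so in the orthonormal basis $e_S$ one finds $\langle e_S,e_S\rangle=(-1)^{|S\cap\{1,2\}|}\prod_{i\in S}e_i^2=(-1)^{|S|}$. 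The crucial point is that the sign $(-1)^{|S\cap\{3,\dots\}|}$ coming from the indefiniteness of $I$ is exactly cancelled by the sign introduced by the complex-structure twist $\operatorname{Ad}_{J_P}$, leaving $(-1)^{|S|}=+1$ on the even part $\Cl_+$. Thus $\langle\,,\,\rangle$ is positive-definite on $\Cl_+(H_\bR)$, and writing $x=u+\sqrt{-1}v$ gives $\langle x,\conj{x}\rangle=\langle u,u\rangle+\langle v,v\rangle>0$ for $x\neq 0$; choosing the sign $+$ makes $Q$ positive-definite. Finally, for a general integral pair $(f_1,f_2)$ the form $Q$ remains Hermitian and non-degenerate, so its signature is locally constant on the space of oriented positive $2$-planes $\operatorname{span}(f_1,f_2)$; this space has two connected components (the two orientations), with representatives $f_1=e_1$ and $f_1=-e_1$ (both with $f_2=e_2$) giving $J_P$ and $-J_P$. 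Positivity at one representative, and the opposite sign at the other, then determine the correct choice of $\pm$ for every $(f_1,f_2)$, completing the proof.
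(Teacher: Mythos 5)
Your argument is correct in substance, but note how it relates to the paper: the paper does not prove \autoref{prop:polarizations} at all --- its proof is the single line ``See \cite[Prop.~5.9]{vG_KS} and the discussion before it.'' So what you have produced is a self-contained reconstruction of the argument the paper outsources. Your strategy --- reduce every trace to the scalar part $[\,\cdot\,]_0$ via $\tr(\alpha)=2^{\dim H}[\alpha]_0$ in a monomial basis, check skew-symmetry and $C$-invariance formally, prove positivity by hand at the representative $f_1f_2=\pm J_P$ (where the twist sign $(-1)^{|S\cap\{1,2\}|}$ cancels the indefiniteness sign $(-1)^{|S\setminus\{1,2\}|}$ on $\Cl_+$), and then propagate to arbitrary integral pairs $(f_1,f_2)$ by constancy of signature on connected components --- is essentially van Geemen's cited proof. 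Relative to the bare citation, your write-up makes the sign/two-component clause of the statement into an actual argument, and integrality becomes transparent (trace of an endomorphism of the free $\bZ$-module $\Cl_+(H_\bZ)$). I verified the key diagonal computation $\langle e_S,e_S\rangle=(-1)^{|S\cap\{1,2\}|}\prod_{i\in S}e_i^2=(-1)^{|S|}$ and the reduction $Q(x,x)=\pm 2^{\dim H}[\operatorname{Ad}_{J_P}(x^t)\conj{x}]_0$; both are right.

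One step is asserted rather than proved, and it is load-bearing: that $I_\KS$ (equivalently $Q$) is \emph{non-degenerate} for an arbitrary orthogonal positive pair $(f_1,f_2)$. You use this twice --- to upgrade isotropy of $F^1$ to the Lagrangian property, and, crucially, in the deformation step: if the Hermitian family could degenerate somewhere along the component, its signature could jump and the propagation of positivity would fail. Fortunately the fix is one line with your own tools: by cyclicity, $I_\KS(x,y)=\pm\tr\bigl(x^t\cdot(y\,f_1f_2)\bigr)$; the trace pairing $\tr(x^ty)$ is diagonal in the monomial basis, with $\tr(e_S^te_T)=0$ for $S\neq T$ and $\tr(e_S^te_S)=\pm 2^{\dim H}\neq 0$, hence non-degenerate on $\Cl_+$; and right multiplication by $f_1f_2$ is a bijection of $\Cl_+$ since $(f_1f_2)(f_2f_1)=I(f_1,f_1)I(f_2,f_2)>0$ shows $f_1f_2$ is invertible. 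Insert this before the Lagrangian and deformation steps and the proof is complete. Two smaller points: the cancellation of cross terms in $\langle x,\conj{x}\rangle=\langle u,u\rangle+\langle v,v\rangle$ needs the symmetry of $\langle\,,\,\rangle$, which you name but never check (alternatively, it is automatic since you have already shown $Q$ is Hermitian, so $Q(x,x)$ is real); and the fact that every orthogonal positive pair can be deformed to $(\pm e_1,e_2)$ --- i.e.\ that there are at most two components, which is the nontrivial half of the two-component claim --- is taken for granted; it is standard (connectedness of the Grassmannian of positive $2$-planes in signature $(2,n)$, plus the $SO(2)$-action on oriented frames of a fixed plane), but deserves a sentence, since distinctness of the two components is exactly what your positivity computation at $\pm J_P$ already gives.
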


\begin{proof}
 See \cite[Prop. 5.9]{vG_KS} and the discussion before it.
\end{proof}

\subsection{The Kuga-Satake construction in families}
\label{subsec:KS_fam}

This section explains how to extend the above construction to variations of Hodge structure.

\paragraph{Variations of Hodge structure.}
For a general introduction to variations of Hodge structure, see \cite[Ch. III]{PerDom}.
First, recall some relevant concepts.

\begin{definition}
\label{def:VHS}
 A \emph{variation of Hodge structure $H$ of weight $w$} over a complex manifold $B$ consists of the following data:
 \begin{itemize}
  \item A local system $H_\bZ$ of free $\bZ$-modules over $B$.
  \item Holomorphic subbundles $F^p\subseteq H_\bC$ of the complexified local system, inducing a Hodge structure of weight $w$ on each fiber of the bundle (see \autoref{def:Hg_str}).
  \item If $\nabla$ denotes the flat connection induced from the local system (called the Gauss-Manin connection), we require that
  $$
  \nabla F^p \subseteq F^{p-1}
  $$
  This is the Griffiths transversality condition.
 \end{itemize}
 The variation is \emph{polarized} if in addition we have on $H_\bZ$ a bilinear pairing $I(-,-)$, flat for the Gauss-Manin connection and inducing a polarization on each fiber of the bundle.
 Note that the positive-definite form $Q(-,-)$ induced from $I$ is typically not flat.
\end{definition}

\begin{remark}
 For variations of Hodge structure of weight $1$, as well as those of K3 type, the Griffiths transversality condition is automatically satisfied.
 In particular, the period domains (described below) are hermitian symmetric.
\end{remark}

\begin{remark}
 \label{rmk:VHS_bdd}
 Let $H\to \bD$ be a variation of Hodge structure on the unit disk.
 According to a result of Griffiths and Schmid \cite[Sect. 9]{GrSch} (see \cite[Cor. 13.4.2]{PerDom}), the classifying map from $\bD$ to the period domain (described below) is contracting.
 The period domain has a natural metric, and $\bD$ is equipped with the hyperbolic metric.
 
 This implies that the boundedness conditions required for the Ose\-ledets theorem are automatically satisfied.
 Namely, the integrand appearing in \autoref{thm:Oseledets} (see \autoref{eq:Os_int}) is uniformly bounded.
\end{remark}

\paragraph{Period domains.}
We now describe ``moduli spaces" of Hodge structures.
Due to the remark above, only in two exceptional cases (which are the ones we consider), these are true moduli spaces.
In other words, \emph{any} holomorphic map from a complex manifold to the moduli space gives a variation of Hodge structure.

\begin{definition}
 Let $H_\bR$ be a real vector space equipped with a non-degenerate $(-1)^w$-symmetric bilinear form $I$.
 Fix numbers $\{h^{p,q}\}_{p+q=w}$ with $h^{p,q}=h^{q,p}$ (called Hodge numbers) and let $f^p = \sum_{i\geq p} h^{i,w-i}$.
 
 The \emph{period domain $X$} of Hodge structures with the above numerical data is defined as follows.
 Consider inside the flag variety the closed subset
 \begin{align*}
 \check{D}:=&\left\lbrace\text{flags } F^p\subseteq F^{p-1}\subseteq \cdots H_\bC \text{ with } \dim_\bC F^p=f^p\right.\\
 &\left.\text{and } F^p \text{ is }I\text{-orthogonal to } F^{w+1-p}
 \right\rbrace
 \end{align*}
 The filtration $F^\bullet$ determines a decomposition $H^{p,q}:=F^p\cap \conj{F^q}$.
 
 The period domain $X\subset \check{D}$ is the open subset for which 
 $$
 (-1)^{p-q}I(\alpha,\conj{\alpha})>0 \text{\hskip 0.2in for all }\alpha\in H^{p,q}$$
\end{definition}

\paragraph{Period domains as homogeneous spaces.}
From the above definition, period domains carry natural transitive group actions.
The (complex) manifold $\check{D}$ can be written as $G(\bC)/P$ for $G$ an algebraic group and $P$ a parabolic subgroup.
Moreover, one can choose $G$ and $P$ such that the period domain is $X=G(\bR)/K$ where $K$ is a compact subgroup of $G(\bR)$ and moreover $K=G(\bR)\cap P$.

For Hodge numbers $(g,g)$ the period domain is the Siegel upper half-space 
$$
\frakh_g:=\Sp_{2g}(\bR)/U_g
$$
For Hodge structures of K3 type, i.e. Hodge numbers $(1,n,1)$ the period domain is 
$$
\Omega_n:=\SO_{2,n}(\bR)/ \SO_2(\bR)\times \SO_n(\bR)
$$
More concretely, let $H_\bR$ be a real vector space with a symmetric bilinear form $I$ of signature $(2+,n-)$.
The flag variety $\check{D}_{\Omega_n}$ which contains $\Omega_n$ is a quadric hypersurface in $\bP(H_\bC)$:
$$
\check{D}_{\Omega_n}:=\left\lbrace [v]\in \bP(H_\bC) \vert I(v,v)=0\right\rbrace
$$
It parametrizes lines corresponding to $H^{2,0}$; once this is known, $H^{2,0}\oplus H^{1,1}$ is determined as the $I$-orthogonal of $H^{2,0}$.
The period domain $\Omega_n$ is the open subset of those $[v]$ for which $I(v,\conj{v})>0$.

The hermitian symmetric domain $\Omega_n$ caries a natural variation of Hodge structure of K3 type, by construction.
The local system is given by $H_\bC$, which is just constant over the base.
The line subbundle $H^{2,0}$ varies holomorphically.

Giving the $1$-dimensional complex subspace $H^{2,0}\subset H_\bC$ is equivalent to giving the $2$-dimensional real subspace $P:=\left(H^{2,0}\oplus H^{0,2}\right)\cap H_\bR$.
This gives a (left) action of the orthogonal group $\SO_{2,n}(H_\bR)$ on $\Omega_n$.
The action extends equivariantly to the bundles $H^{2,0}\subset H_\bC$.
On $H_\bC$ the action is via the standard representation.

We shall need two descriptions of $\Omega_n$.
Let $G:=\SO_{2,n}$ and $G_1:=\Spin_{2,n}$ be its spin double cover, with $K$ and $K_1$ their maximal compacts.
Then $\Omega_n = G(\bR)/K=G_1(\bR)/K_1$.

\paragraph{Automorphic vector bundles.}
A more detailed discussion of the next topic (in the context of Shimura varieties) is given by Milne in \cite[Sect. III]{Milne}.

Consider a space equipped with a group action $G\curvearrowright X$ and a vector bundle $\cV\to X$ with the action of $G$ extending to $\cV$.
If $X=G/H$ for some subgroup $H\subseteq G$ and $\rho:H\to \GL(V)$ is a representation of $H$, an equivariant vector bundle on $X$ is defined by
$$
\cV_\rho:=G\times_H V = \left. G\times V \middle/ \left\lbrace (g,v)\sim (gh^{-1},\rho(h)v)\right\rbrace \right.
$$
We have a natural map $\cV_\rho\to G/H$ and an equivariant $G$-action on the left.
The vector bundle associated to a representation $\rho$ is denoted $\cV_\rho$.

\begin{remark}
 In the construction above, if the representation $\rho$ extends to $G$, then we have a $G$-equivariant isomorphism
 $$
 \cV_\rho \cong \left(G/H\right)\times V
 $$
 On the right-hand side $G$ acts diagonally.
\end{remark}

\paragraph{Automorphic vector bundles on $\Omega_n$.}
Fix a free $\bZ$-module $H_\bZ$ with a symmetric billinear form $I(-,-)$ of signature $(2+,n-)$ on $H_\bR$.
We also fix a Hodge structure of K3 type on $H$, i.e. a decomposition
$$
H_\bC=H^{2,0}\oplus H^{1,1}\oplus H^{0,2}
$$
With the notation from previous paragraphs, we have the groups $G=\SO(H,I)$ and $G_1=\Spin(H,I)$.
The Hodge structure on $H_\bC$ gives parabolics $P\subset G,P_1\subset G_1$ fixing $H^{2,0}$.

The representations relevant for the Kuga-Satake construction are:
\begin{align*}
\rho_{spin}&:G_1 \to \Aut(\Cl_+(H))\\
\rho_{std}&:G_1 \to \Aut(H)\\
\rho_{tw}&:P_1\to \Aut(H^{2,0})
\end{align*}
This leads to equivariant vector bundles $\cV_{\rho_{spin}},\cV_{\rho_{std}}, \cV_{\rho_{tw}}$.
These are defined on the bigger space $G_1(\bC)/P_1=\check{D}$ containing the period domain.

We also have an inclusion $\cV_{\rho_{tw}}\subset \cV_{\rho_{std}}$ compatible with the action of $G_1$.
Over $\Omega_n$ these give the universal variation of Hodge structure of K3 type, with $\cV_{\rho_{tw}}$ serving as $H^{2,0}$.

\paragraph{The weight $1$ automorphic vector bundle on $\Omega_n$.}
The next step describes the automorphic subbundle $\Cl^{1,0}_+\subset \cV_{\rho_{spin}}$ giving the weight $1$ variation of Hodge structure.
After choosing two integral vectors $f_1, f_2$ as in \autoref{prop:polarizations}, the bundle $\cV_{\rho_{spin}}$ acquires an equivariant polarization.

Next, fix some $v_0\in H_\bR$ such that $I(v_0,v_0)\neq 0$.
Clifford multiplication gives a $G_1$-equivariant map
\begin{center}
\begin{tabular}{rccc}
 $\phi:$	&$\cV_{\rho_{tw}}\otimes \cV_{\rho_{spin}}$ &$\to$	& $\cV_{\rho_{spin}}$\\
  &$  \omega\otimes \alpha $			     &$\mapsto$& $\omega\cdot \alpha \cdot v_0$
\end{tabular}
\end{center}
Equivariance follows since for $g\in G_1$ we have $\rho_{std}(g)\omega = g\omega g^{-1}$ and $\rho_{spin}(g)\alpha=g\cdot \alpha$.
Note that $\cV_{\rho_{tw}}\subset \cV_{\rho_{std}}$ in a $G_1$-equivariant way.

\autoref{prop:Cl10} implies that the kernel of $\phi$ is $\cV_{\rho_{tw}}\otimes \Cl^{1,0}_+$ and the image is $\Cl^{1,0}_+$.
Recall also that we have a short exact sequence
$$
0\to \Cl^{1,0}_+\to \cV_{\rho_{spin}} \to \Cl^{0,1}_+\to 0
$$
The polarization identifies the $(0,1)$ bundle with the dual of the $(1,0)$ bundle:
$$
\Cl^{0,1}_+ \cong \left( \Cl^{1,0}_+ \right)^\dual
$$
So the map $\phi$ defined above induces an isomorphism
$$
\phi:\cV_{\rho_{tw}}\otimes \left(\Cl^{1,0}_+\right)^\dual \to \Cl_+^{1,0}
$$
Noting that $\dim_\bC \Cl^{1,0}_+=2^n$ and taking determinants gives an isomorphism of line bundles:
\begin{align}
\label{eqn:deg_Hg}
\cV_{\rho_{tw}}^{2^n}\cong \det \left(\Cl^{1,0}_+\right)^2
\end{align}
This calculation was also done by Maulik \cite[Prop. 5.8]{Maulik} (note that the extra power appearing there is due to the non-algebraically closed situation).

\newcommand{\GO}{\operatorname{GO}}

\begin{remark}
 An equivalent description of the period domain is that it parametrizes conjugacy classes of homomorphisms $h:\bS\to G$ for an appropriate group $G$ (see \autoref{def:Hg_str}).
 The action of $G$ is by conjugating the homomorphisms. 
 
 The domain $\Omega_n$ naturally parametrizes homomorphisms $h:\bS\to \GO_{2,n}(\bR)$.
 Taking the standard representation of $\GO_{2,n}(\bR)$ leads to one collection of equivariant bundles over the domain.
 They give a variation of Hodge structure of K3 type.
 
 But one can always lift such $h$ to $\tilde{h}:\bS\to \CSpin$ and take the representation of $\CSpin$ on the Clifford algebra.
 This leads to the other collection of equivariant bundles.
 They give a variation of Hodge structure of weight $1$.
 
 A different way to say it is that this leads to an embedding of Shimura data (see e.g. \cite[Prop. 5.7]{Maulik}).
\end{remark}

A consequence of the above discussion, necessary for computing Lyapunov exponents, is the following.

\begin{theorem}[Deligne {\cite[Prop. 5.7]{Deligne_K3}}, Kuga-Satake \cite{KS} ]
\label{thm:KS}
 Let $H\to B$ be a polarized variation of Hodge structure of K3 type over a base $B$.
 Then there exists a finite unramified cover $B'\to B$ and a weight $1$ polarized variation of Hodge structure $\KS(H)$ over $B'$, satisfying:
 \begin{itemize}
  \item If the local system of $H$ comes from a representation
  $$
  \pi_1(B)\xrightarrow{\rho} \SO_{2,n}\curvearrowright \bR^{2,n}
  $$
  then the local system of $\KS(H)$ comes from a lift of $\rho$ to the spin group:
  $$
  \pi_1(B')\xrightarrow{\rho'} \Spin_{2,n}\curvearrowright \left(\operatorname{Spin Rep}\right)^{2^{\lfloor n/2\rfloor}}
  $$
  Here $\operatorname{Spin Rep}$ denotes the irreducible spin representation if $n$ is odd, or the direct sum of the two irreducible spin representations if $n$ is even.  
  \item The rank of the local system for $\KS(H)$ is $2^{n+1}$, and there is an isomorphism of holomoprhic line bundles
  $$
  \left(H^{2,0}\right)^{2^{n-1}} \cong \det \left( \KS(H)^{1,0} \right)
  $$
  The rank of $\KS(H)^{1,0}$ is $2^n$, so $\det$ means its $2^n$ exterior power.
 \end{itemize}
\end{theorem}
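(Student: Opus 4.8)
The plan is to reduce the statement to the equivariant construction already carried out on the period domain $\Omega_n$, and then to globalize it by pulling back along the period map after passing to a cover that lifts the monodromy to the spin group. Recall that a polarized variation of K3 type on $B$ is the same as a holomorphic period map $\widetilde{\mathcal P}\colon \widetilde B\to \Omega_n$ from the universal cover, equivariant for the monodromy representation $\rho\colon \pi_1(B)\to \SO(H,I)=\SO_{2,n}$, where $\dim_\bR H=n+2$. Over $\Omega_n$ the preceding discussion produced the $\Spin$-equivariant bundles $\cV_{\rho_{spin}}\supseteq \Cl^{1,0}_+$, the polarization $I_\KS$ of \autoref{prop:polarizations}, and the weight-$1$ Hodge filtration with $\Cl^{1,0}_+$ as its $(1,0)$-piece; because $\Omega_n$ is Hermitian symmetric and the Kuga--Satake assignment is a holomorphic map into the Siegel domain, these data form the universal weight-$1$ variation and automatically satisfy Griffiths transversality and polarization positivity. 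Thus, once $\rho$ is lifted to $\Spin$, one simply sets $\KS(H):=\widetilde{\mathcal P}^{\ast}\Cl_+(H)$ with the pulled-back filtration and polarization; everything is inherited from $\Omega_n$, and the underlying local system is the left-regular $\rho_{spin}$-action of $\Spin$ on $\Cl_+(H)$.

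The only genuine obstruction is that $\rho$ need not lift through the two-to-one cover $p\colon\Spin_{2,n}\to \SO_{2,n}$, and this is exactly what forces the passage to a finite cover $B'\to B$. I would produce $B'$ as follows. Let $\Gamma:=\rho(\pi_1(B))\subset \SO_{2,n}$ and let $\Gamma':=p^{-1}(\Gamma)\subset \Spin_{2,n}$, a central extension of $\Gamma$ by $\{\pm 1\}$. Since $\Gamma'$ is a finitely generated linear group it is residually finite by Malcev's theorem, so there is a finite-index normal subgroup $\Gamma''\trianglelefteq \Gamma'$ avoiding the central element $-1$. Then $p|_{\Gamma''}$ is injective, hence an isomorphism onto a finite-index subgroup $\Gamma_0:=p(\Gamma'')\subseteq \Gamma$, whose inverse is the desired lift $\Gamma_0\hookrightarrow \Spin_{2,n}$. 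Taking $B'\to B$ to be the unramified cover with $\pi_1(B')=\rho^{-1}(\Gamma_0)$ yields a lifted monodromy $\rho'\colon \pi_1(B')\to \Spin_{2,n}$, and (enlarging the cover if needed) we may also arrange that the image preserves the orientation of the positive plane $P$ used to define $J_P$ and that the choices of $f_1,f_2,v_0$ and the sign in \autoref{prop:polarizations} are globally consistent. I expect this step to be the main obstacle, since it is the only reason $B'\neq B$; note that when $\pi_1(B)$ is free, as for a punctured curve, the $H^2(\pi_1(B);\{\pm1\})$-obstruction vanishes and only the orientation issue can require a cover.

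With $\KS(H)$ in hand, the two numerical assertions follow from the structure theory already set up. The rank is $\dim \Cl_+(H)=2^{(n+2)-1}=2^{n+1}$. For the local system, the Artin--Wedderburn decomposition of the complexified even Clifford algebra shows that $\Cl_+(H)\otimes\bC$ is a single matrix algebra when $n$ is odd and a product of two matrix algebras when $n$ is even; correspondingly the $\rho_{spin}$-action decomposes the $2^{n+1}$-dimensional space into copies of the irreducible spin representation (for $n$ odd) or of the two half-spin representations (for $n$ even). Matching the total dimension against $2^{n+1}$ then fixes the multiplicity and recovers the displayed form $(\operatorname{Spin Rep})^{2^{\lfloor n/2\rfloor}}$.

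Finally, the line-bundle identity comes from \autoref{eqn:deg_Hg}, which the map $\phi$ together with \autoref{prop:Cl10} already gives as $\cV_{\rho_{tw}}^{2^n}\cong (\det \Cl^{1,0}_+)^2$ on $\Omega_n$. The subtlety is that, taken naively, this pins down the desired isomorphism only \emph{up to squaring}. To remove the ambiguity I would work at the level of $\Spin$-equivariant line bundles on $\Omega_n$, which correspond to characters of the parabolic $P_1$ and therefore form a torsion-free lattice; in that lattice the relation $2\,[\det \Cl^{1,0}_+]=2^n\,[\cV_{\rho_{tw}}]$ forces $[\det \Cl^{1,0}_+]=2^{n-1}\,[\cV_{\rho_{tw}}]$, i.e. $\det \Cl^{1,0}_+\cong \cV_{\rho_{tw}}^{2^{n-1}}$ equivariantly. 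Pulling this back along the lifted period map and identifying $\cV_{\rho_{tw}}$ with $H^{2,0}$ and $\Cl^{1,0}_+$ with $\KS(H)^{1,0}$ yields $(H^{2,0})^{2^{n-1}}\cong \det(\KS(H)^{1,0})$, as claimed.
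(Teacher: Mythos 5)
Your proposal is correct and takes essentially the same route as the paper's proof: realize the variation via the $\pi_1$-equivariant period map to $\Omega_n$, pass to a finite unramified cover on which the monodromy lifts to $\Spin_{2,n}$, pull back the equivariant Kuga--Satake bundles, and deduce the line-bundle relation from \autoref{eqn:deg_Hg}. You in fact supply two details the paper leaves implicit --- the Malcev/residual-finiteness construction of the lifting cover, and the character-lattice argument extracting the square root of $\cV_{\rho_{tw}}^{2^n}\cong\left(\det\Cl^{1,0}_+\right)^2$ --- and both are sound.
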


\begin{proof}
 Fix a base point $b\in B$.
 This gives a monodromy representation $\pi_1(B,b) \to \GO (H,I)$, as well as a $\pi_1$ equivariant map from the universal cover of $B$ to the period domain classifying Hodge structures of the same type:
 $$
 \tilde{B}\to \Omega_n
 $$
 Pick a finite index subgroup $\Gamma$ of $\pi_1(B,b)$ such that the monodromy lifts to the spin group, and acts on $\Omega_n$ freely.
 The variation of Hodge structure of weight $1$ from the Kuga-Satake construction pulls back to $\tilde{B}$ and is equivariant for the action of $\Gamma$.
 Taking the quotient gives the desired variation over a finite cover of $B$. 
 
 The claim about the nature of the local system on $\KS(H,I)$ follows from the construction of automorphic bundles.
 The degree of the Hodge bundle is computed in \autoref{eqn:deg_Hg}.
\end{proof}

\begin{remark}
 The weight $1$ variation of Hodge structure obtained above from the Kuga-Satake construction has a large endomorphism ring.
 In fact, the weight $2$ variation $H$ embeds into $\End(\KS(H,I))$.
\end{remark}

\section{Preliminaries from dynamics}
\label{sec:Prelims_Dyn}

This section contains a basic discussion of flows, cocycles, and associated Lyapunov exponents.
The Oseledets theorem, as well as a more geometric point of view on it, is in~\autoref{subsec:Oseledets}.
Computations for spin representations, necessary for using the Kuga-Satake construction, are in~\autoref{subsec:spin_reps}.

\subsection{Oseledets theorem}
\label{subsec:Oseledets}

First we recall the Multiplicative Ergodic Theorem.
For a clear introduction to this topic, see Ledrappier's lectures \cite{Ledrappier}.

\begin{theorem}[Oseldets]
\label{thm:Oseledets}
 Let $(X,\mu)$ denote a probability measure space, equipped with an ergodic flow $g_t$.
 Suppose $V\to X$ is a vector bundle over $X$, equipped with a norm $\norm{-}$ and with a lift of the $g_t$-action.
 This means we have linear maps between fibers:
 $$
 g_t(x):V_x\to V_{g_tx}
 $$
 Assume $L^1$-boundedness of the linear maps, i.e.
 \begin{align}
 \label{eq:Os_int}
 \int_X \sup_{t\in [-1,1]} \norm{ g_t(x) }_{V_x\to V_{g_t x} }d\mu(x) <\infty
 \end{align}
 Then there exist numbers $\lambda_1>\lambda_2>\cdots >\lambda_r$, called Lyapunov exponents, and a measurable $g_t$-invariant decomposition of the bundle
 $$
 V_x = \bigoplus_i V^{\lambda_i}_x
 $$
 such that for $\mu$-a.e. $x\in X$ and $v\in V^{\lambda_j}_x$ we have the asymptotic growth of norm:
 $$
 \lim_{t\to \pm\infty} \frac 1t \log{\norm{g_t v }} = \lambda_j
 $$
 If say $\lambda_j>0$ this means that vectors in $V^{\lambda_j}$ are exponentially expanded/contracted when $t$ goes to plus/minus infinity.
\end{theorem}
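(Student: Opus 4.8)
The plan is to realize $g_t(x)$ as a multiplicative cocycle, extract the exponents from Kingman's subadditive ergodic theorem applied to it and to its exterior powers, and then build the splitting from the interaction of the forward and backward dynamics. Write $A(x,t) := g_t(x)\colon V_x \to V_{g_t x}$; the assumed lift of the flow is precisely the cocycle identity $A(x,s+t) = A(g_t x, s)\circ A(x,t)$. Submultiplicativity of the operator norm gives $\log\norm{A(x,s+t)} \le \log\norm{A(x,t)} + \log\norm{A(g_t x,s)}$, so $t\mapsto \log\norm{A(x,t)}$ is a subadditive cocycle over the flow, and the integrability hypothesis \autoref{eq:Os_int} supplies exactly the $L^1$ control needed to invoke (the continuous-parameter form of) Kingman's theorem. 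This yields almost-everywhere convergence of $\frac 1t \log\norm{A(x,t)}$ as $t\to +\infty$. The limit is unchanged if the base point is moved along the flow by a fixed amount (this only alters the argument by a bounded, $t$-independent factor), so it is $g_s$-invariant for every $s$ and hence constant by ergodicity of the flow; this constant is $\lambda_1$. Handling the flow directly in this way avoids the subtlety that ergodicity of $g_t$ need not pass to the time-one map.

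To obtain every exponent I would apply the same argument to the induced cocycles $\bigwedge^k A(x,t)$ on the exterior powers $\bigwedge^k V$. Since $\norm{\bigwedge^k A}$ equals the product of the $k$ largest singular values of $A$, Kingman produces constants $\nu_k := \lim_t \frac 1t \log\norm{\bigwedge^k A(x,t)}$ equal to the partial sums of the top Lyapunov exponents. Taking successive differences $\lambda_k = \nu_k - \nu_{k-1}$ (with $\nu_0 = 0$) recovers the individual exponents with multiplicity, and collapsing repetitions gives the distinct values $\lambda_1 > \cdots > \lambda_r$.

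The substantive part is the invariant splitting $V_x = \bigoplus_i V^{\lambda_i}_x$ together with the sharp two-sided growth rate. Running the dynamics forward, one shows that the positive symmetric operators $(A(x,t)^* A(x,t))^{1/2t}$ converge as $t\to +\infty$; the eigenspace filtration of the limit is a measurable decreasing filtration $V_x = F^{\lambda_r}_x \supseteq \cdots \supseteq F^{\lambda_1}_x \supseteq \{0\}$ for which every $v \in F^{\lambda_j}_x \setminus F^{\lambda_{j+1}}_x$ satisfies $\limsup_t \frac 1t \log\norm{A(x,t) v} \le \lambda_j$. Running the flow backward produces a second, opposite filtration, and the mutual intersections of the two filtrations define the subbundles $V^{\lambda_i}_x$. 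Their $g_t$-invariance is then immediate from the cocycle identity, and measurability follows from that of the limiting operators.

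I expect the transversality (general position) of the forward and backward filtrations to be the main obstacle: it is what upgrades the one-sided bound $\limsup \le \lambda_j$ into the exact two-sided limit claimed in the theorem, and it is where ergodicity and integrability are genuinely used, via the invariance of the filtrations and a dimension count forcing the intersections to have the correct ranks. The convergence of $(A^*A)^{1/2t}$ is the other delicate analytic input; for both of these I would follow the self-contained treatment in Ledrappier's lectures \cite{Ledrappier} or Raghunathan's proof via exterior powers rather than reproduce the estimates here.
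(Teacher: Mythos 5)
There is nothing in the paper to compare against: \autoref{thm:Oseledets} is the classical multiplicative ergodic theorem, which the paper states as background and does not prove, pointing instead to Ledrappier's lectures \cite{Ledrappier}. Your outline is therefore not an alternative to anything in the paper; it is a sketch of the standard proof that the cited reference carries out. As a sketch it is sound and correctly organized: the cocycle identity, Kingman's subadditive theorem for the top exponent (your observation that the a.e.\ limit is invariant under every $g_s$, and hence constant by ergodicity of the \emph{flow}, is exactly the right way around the fact that the time-one map need not be ergodic), exterior powers \`a la Raghunathan for the partial sums $\nu_k$, and the intersection of the forward and backward filtrations for the splitting --- with invertibility of the cocycle, needed for the backward filtration, being automatic here because time is two-sided, since $A(x,t)^{-1}=A(g_t x,-t)$, and the symmetric hypothesis \autoref{eq:Os_int} over $t\in[-1,1]$ controlling both directions.

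Two caveats. First, a notational slip: with $\lambda_1>\cdots>\lambda_r$ and $F^{\lambda_j}_x$ the span of the limiting eigenspaces with exponent at most $\lambda_j$, the forward filtration is decreasing in $j$, namely $V_x=F^{\lambda_1}_x\supseteq F^{\lambda_2}_x\supseteq\cdots\supseteq F^{\lambda_r}_x\supseteq\{0\}$; as you wrote it ($V_x=F^{\lambda_r}_x\supseteq\cdots\supseteq F^{\lambda_1}_x$) the set $F^{\lambda_j}_x\setminus F^{\lambda_{j+1}}_x$ in your growth estimate is empty, so the chain and the estimate are mutually inconsistent --- one of them must be reversed. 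Second, the two genuinely hard analytic steps --- convergence of $\left(A(x,t)^*A(x,t)\right)^{1/2t}$ and the transversality of the forward and backward filtrations, which upgrades $\limsup$ bounds to exact two-sided limits --- are deferred to \cite{Ledrappier} rather than proved, so what you have is a correct proof skeleton rather than a complete proof. Since the paper itself imports the theorem from the literature at exactly this level, that is a reasonable place to stop, but those two steps are where all the real work lies.
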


\begin{remark}
 When the vector bundle $V$ comes from a variation of Hodge structures over a hyperbolic Riemann surface, and $g_t$ is the geodesic flow, the boundedness assumption is automatic.
 By \autoref{rmk:VHS_bdd}, the integrand in \autoref{eq:Os_int} is uniformly bounded.
\end{remark}

\paragraph{Covers.}
Let $(X',\mu')\to (X,\mu)$ be a finite cover with a compatible lift of the flow $g_t$, which remains ergodic.
Let $V'$ be the pullback of the bundle $V$ to a bundle over $X'$; the action of the flow extends naturally.

The Oseldets theorem also applies to $(X',\mu')$ and $V'$.
The Lyapunov exponents agree with those for $V$, and the measurable decomposition is the pullback of the one on $V$.

\paragraph{Principal bundles.}
A vector bundle $V\to X$ corresponds to a principal bundle $P\to X$.
If the fibers of $V$ are isomorphic to $\bR^n$, the fibers of $P$ are isomorphic to linear automorphisms of $\bR^n$.
Moreover, $P$ carries a free action on the right of a group of linear automorphisms.

The fiber $P_x$ of $P$ over $x$ is defined to be the space of linear isomorphisms from $V_x$ to $\bR^n$.
The natural action of $\GL_n(\bR)$ is on the right, by post-composing with self-maps of $\bR^n$.

Often the vector bundle carries extra structure, such as an invariant bilinear form.
In this case, the fibers of the principal bundle are defined as isomorphisms preserving the extra structure.
The action on the right is by the group of linear isomorphisms of $\bR^n$ preserving the extra structure.

\paragraph{Changing the representation.}
Consider a principal bundle $P\to X$ with structure group $G$ acting on $P$ on the right.
Given a representation $\rho:G\to \GL(\bR^m)$ we can form an associated vector bundle over $X$ via $W:=P\times_G \bR^m \to X$, where
$$
W:=P\times_G \bR^m :=\{ (p,w)\sim (pg,\rho(g^{-1})w ) \vert \forall g\in G\}
$$
Suppose the vector bundle $V$ which produced the principal bundle $P$ carried a flow $g_t$.
Then so does the principal bundle $P$, and so will the new vector bundle $W$.
The relation between the Lyapunov exponents of $V$ and $W$ is explained in the next section.

\begin{remark}
 Typically the above abstract construction is unnecessary.
 The vector bundle $V$ corresponds to the standard representation of $G$ and one can apply tensor operations to $V$ and obtain new bundles, which will contain most representations.
 
 In the case relevant to the Kuga-Satake construction, this direct approach does not work.
 The representation that appears is a spin representation, and it does not occur in tensor powers of the standard representation.
 This leads to the abstract considerations above, and in the next section.
\end{remark}

Describing Lyapunov exponents in this general setting depends on some Lie theory, described below.
More details are in Bump's monograph \cite[Part III]{Bump}.

\newcommand{\res}{\text{res}}

\paragraph{Structure of semisimple Lie groups.}
Let $G$ be a connected semisimple Lie group.
It has an Iwasawa decomposition
$$
G=KAK
$$
where $K$ is a maximal compact subgroup, and $A$ is a maximal $\bR$-split torus.
Denote by $\Phi$ the root system of the complexification $G_\bC$ and let $\Phi_\res$ be the restricted root system associated to the real form $G$.
They are related by a map $r:\Phi\to \Phi_\res$.
Recall also that $\Phi_\res$ is contained in $\fraka^\dual$, the dual of the Lie algebra of $A$.

Given a representation $\rho$ of $G$, its weights $\Sigma_\rho$ are contained in $\Phi$.
We also have the restricted weights $r(\Sigma_\rho)$, the projection of the weights to $\Phi_\res$.
All weights are taken with multiplicities.

Given a diagonalizable element $a\in G$, after conjugation assume it is in $A$.
To describe the eigenvalues of $\rho(a)$ on $V_\rho$ consider the restricted weights 
$$
r(\Sigma_\rho)\subset \Phi_\res\subset \fraka^\dual
$$
Then the logs of the eigenvalues of $\rho(a)$ equal the evaluation of the restricted weights against $\log a\in \fraka = \Lie A$.

\paragraph{Universal Lyapunov exponents}
The following reformulation of the Oseledets theorem is due to Kaimanovich \cite{Kaimanovich}.

Let $g_t\curvearrowright (X,\mu)$ be a an ergodic flow on a probability space.
Assume the flow lifts to a principal $G$-bundle $P\to X$, where $G$ is a semisimple Lie group and the lift satisfies an appropriate $L^1$-boundedness assumption.

Let $\Phi, \Phi_\res$ be the root system and its restricted counterpart.
Let also $\fraka$ denote the Lie algebra of a maximal split torus of $G$, and $\fraka^\dual$ its dual.
Denote by $\fraka_+$ the positive Weyl chamber of $\fraka$.

Then there exists a \textbf{Lyapunov vector} $\Lambda\in \fraka_+$ which controls Lyapunov exponents as follows.
Given a representation $\rho$ of $G\curvearrowright V_\rho$ with weights $\Sigma_\rho\subset \Phi$, form the associated vector bundle $P\times_G V_\rho$ .
Then its Lyapunov exponents are given by evaluating the restricted weights $r(\Sigma_\rho)$ on the Lyapunov vector $\Lambda$.

\begin{example}
 Suppose $G=\SL_{n+1}(\bR)$.
 Then the maximal split torus is
 $$
 A=\left\lbrace\diag(e^{x_1},\ldots,e^{x_{n+1}})\middle\vert \sum x_i=0 \right\rbrace
 $$
 The two root systems $\Phi$ and $\Phi_\res$ agree and are of type $A_n$.
 The Lie algebra $\fraka$ is given by
 $$
 \fraka = \left\lbrace(x_1,\ldots,x_{n+1})\in \bR^{n+1}\middle\vert \sum x_i =0 \right\rbrace
 $$
 The positive Weyl chamber is given by
 $$
 \fraka_+ = \left\lbrace (x_1,\ldots,x_{n+1})\in \bR^{n+1}\middle\vert \sum x_i =0, x_1\geq \cdots \geq x_{n+1} \right\rbrace
 $$
 The dual of $\fraka$ is given as a quotient
 $$
 \fraka^\dual = \left. \left\lbrace(\xi_1,\ldots,\xi_{n+1})\in \bR^{n+1} \right\rbrace \middle/\sum \xi_i = 0\right.
 $$
 The weights of the standard representation of $\SL_{n+1}(\bR)$ are the standard basis vectors $e_i\in \bR^{n+1}$ projected to $\fraka^\dual$.
\end{example}

\begin{remark}
 From the description of the Lyapunov spectrum as a vector in a Weyl chamber, degeneracy in the spectrum has a geometric interpretation.
 For example, coincidence of two exponents (in the standard representation of $\SL_{n+1}$) is the same as the Lyapunov vector hitting a wall of the positive chamber $\fraka_+$.
\end{remark}
The next section contains a detailed analysis of an example involving spin representations.

\subsection{Computations with Spin representations}
\label{subsec:spin_reps}
The analysis of the root systems arising in the case of $\so_{2,n}(\bR)$ is divided in two cases, depending on the parity of $n$.
\begin{center}
\textbf{Type $B, \so_{2,2k-1}(\bR)$}
\end{center}
In this case $n=2k-1$ is odd.
The Lie algebra $\so_{2k+1}(\bC)$ has root system of type $B_k$.
In $\bR^k$ with standard basis $\{e_i\}$ the roots and fundamental weights are

\begin{tabular}{lll}
  Simple roots & $\alpha_i = e_i-e_{i+1}$ 	& for $ i=1\ldots k-1$\\
	       & $\alpha_k = e_k$ 		& \\
  Fund. weights& $\varpi_i=e_1+\cdots+e_i$ 	& for $i=1\ldots k-1$\\
	      & $\varpi_k = \frac 12 (e_1+\cdots +e_k)$ 	&
\end{tabular} 

\noindent The restricted root system of $\so_{2,n}(\bR)$ is of type $B_2$, with simple roots 
\begin{align*}
 \beta_1&:=f_1-f_2\\
 \beta_2&:=f_2
\end{align*}
The map (see \cite[Table 4]{Vinberg}) to the restricted root system is
$$
r(\alpha_i)=\beta_i \text{ for } i=1,2 \text{ and } r(\alpha_i)=0 \text{ otherwise}
$$
Solving the equations for $e_i$ we find
\begin{align*}
r(e_1)&=\beta_1+\beta_2 = f_1\\
r(e_2)&=\beta_2 = f_2\\
r(e_j)&=0 	\text{ otherwise}
\end{align*}
The weights of the spin representation (see \cite[Thm. 31.2]{Bump}) occur with multiplicity one and are
$$
\left\lbrace\frac 12 \left(\pm e_1\pm e_2 \cdots\pm e_k\right)\right\rbrace
$$
Mapping them to the restricted root system, we find that each of the following occurs with multiplicity $2^{k-2}=2^{(n-3)/2}$
$$
\left\lbrace\frac 12 (f_1+f_2),\frac 12 (f_1-f_2), -\frac 12 (f_1-f_2), -\frac 12 (f_1+f_2)\right\rbrace
$$
For the standard representation of $\so_{2,2k-1}$ the weights in $B_k$ are
$$
\{e_1,\ldots, e_k, 0, -e_k,\ldots, -e_1\}
$$
Therefore, the restricted weights are (where $0$ occurs with multiplicity $2k-3=n-2$)
$$
\{f_1,f_2,0,\ldots,0,-f_2,-f_1 \}
$$
Applying the discussion concerning Lyapunov exponents corresponding to different representations, the next result follows.

\begin{proposition}
\label{prop:odd_spin}
 Suppose $g_t\curvearrowright(X,\mu)$ is a probability space equipped with an ergodic flow.
 Let $V\to X$ be a vector bundle equipped with a metric of signature $(2,2k-1)$ and a lift of the action of $g_t$ preserving the metric.
 Suppose $V$ also carries a positive-definite metric, not invariant under $g_t$, but for which the flow is integrable in the sense of the Oseledets theorem.
 
 Let $W\to X'$ be the vector bundle derived from $V$, after perhaps passing to a finite cover, where the fibers correspond to the spin representation.
 The structure group changes from $O_{2,2k-1}$ to its spin double cover.
 
 Then the Lyapunov exponents of $(g_t,V)$ are
 $$
 \lambda_1 \geq \lambda_2 \geq 0 \geq \cdots \geq 0 \geq -\lambda_2 \geq -\lambda_1
 $$
 where $0$ occurs with multiplicity $2k-3$.
 
 The Lyapunov exponents of $(g_t,W)$ are
 $$ 
 \frac 12 (\lambda_1+\lambda_2)\geq \frac 12 (\lambda_1 -\lambda_2) \geq -\frac 12 (\lambda_1-\lambda_2) \geq -\frac 12 (\lambda_1+\lambda_2)
 $$
 Each of them occurs with multiplicity $2^{k-2}$.
 Note the dimension of the spin representation is $2^k$.
\end{proposition}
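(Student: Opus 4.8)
The plan is to apply the universal Lyapunov exponent framework (the Kaimanovich reformulation) directly, since all the necessary weight computations have already been carried out above. After passing to the finite cover $X'$ on which the monodromy lifts to $\Spin_{2,2k-1}$, both $V$ and $W$ become associated bundles to one and the same principal $\Spin_{2,2k-1}$-bundle; by the ``Covers'' discussion the exponents of $V$ are unchanged under this pullback. The crucial structural fact is therefore that a \emph{single} Lyapunov vector $\Lambda \in \fraka_+$ governs the exponents of every associated bundle simultaneously.

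First I would invoke the existence of $\Lambda$ in the positive Weyl chamber of the restricted Cartan $\fraka$, which for the restricted root system $B_2$ is $2$-dimensional. I would parametrize $\Lambda$ by setting $\lambda_1 := \langle f_1, \Lambda\rangle$ and $\lambda_2 := \langle f_2, \Lambda\rangle$, where $f_1, f_2$ is the basis fixing the restricted simple roots $\beta_1 = f_1 - f_2$ and $\beta_2 = f_2$. Membership $\Lambda \in \fraka_+$ says exactly that $\langle \beta_1, \Lambda\rangle \geq 0$ and $\langle \beta_2, \Lambda\rangle \geq 0$, i.e. $\lambda_1 - \lambda_2 \geq 0$ and $\lambda_2 \geq 0$, which is the asserted ordering $\lambda_1 \geq \lambda_2 \geq 0$.

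Next, for $V$ (the standard representation) the restricted weights were computed above to be $f_1, f_2, 0, \ldots, 0, -f_2, -f_1$ with the zero weight of multiplicity $2k-3$. Evaluating each against $\Lambda$ produces the exponents $\lambda_1, \lambda_2, 0, \ldots, 0, -\lambda_2, -\lambda_1$, which is the first assertion together with the claimed multiplicity of the zero exponent. For $W$ (the spin representation) the restricted weights were shown to be $\pm \tfrac12(f_1 \pm f_2)$, each of multiplicity $2^{k-2}$; evaluating these against the \emph{same} $\Lambda$ yields the four numbers $\pm \tfrac12(\lambda_1 \pm \lambda_2)$ with the stated multiplicities. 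A dimension count $4 \cdot 2^{k-2} = 2^k$ confirms that the full spin representation has been accounted for.

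The one point requiring genuine care is the assertion that a single $\Lambda$ controls both bundles; this is precisely the content of the universal formulation, in which $\Lambda$ depends only on the flow and the principal bundle, not on the chosen representation. I would also need to check that the $L^1$-integrability hypothesis of \autoref{thm:Oseledets} transfers to $W$: this holds because $W$ is associated to the same principal bundle as $V$, so its norm growth is controlled by the same cocycle, and via the contraction property recalled in \autoref{rmk:VHS_bdd} boundedness is automatic in the geometric setting of interest. Apart from these structural verifications, the proposition reduces to bookkeeping of restricted weights already in hand.
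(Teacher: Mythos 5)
Your proposal is correct and follows essentially the same route as the paper: the paper derives \autoref{prop:odd_spin} exactly by combining the restricted-weight computations for the standard and spin representations of $\so_{2,2k-1}$ with the Kaimanovich universal Lyapunov vector $\Lambda\in\fraka_+$, which is why no separate proof environment appears there. Your explicit verifications (the chamber condition giving $\lambda_1\geq\lambda_2\geq 0$, the single $\Lambda$ controlling all associated bundles, and the transfer of integrability to $W$) simply spell out what the paper leaves implicit.
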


\begin{center}
\textbf{Type $D, \so_{2,2k-2}(\bR)$}
\end{center}
In this case $n=2k-2$ is even.
The Lie algebra $\so_{2k}(\bC)$ has root system of type $D_k$.
In $\bR^k$ with standard basis $\{e_i\}$ the roots and fundamental weights are

\begin{tabular}{lll}
  Simple roots & $\alpha_i = e_i-e_{i+1}$ 	& for $ i=1\ldots k-1$\\
	       & $\alpha_k = e_{k-1}+e_n$ 	& \\
  Fund. weights& $\varpi_i=e_1+\cdots+e_i$ 	& for $i=1\ldots k-2$\\
		&$\varpi_{k-1} = \frac 12 (e_1+\cdots+e_{n-1} -e_k)$ & \\
		&$\varpi_k = \frac 12 (e_1+\cdots+e_{k-1} +e_k)$ &
 \end{tabular}

\noindent The restricted root system of $\so_{2,n}(\bR)$ is of type $B_2$, with simple roots 
\begin{align*}
 \beta_1&:=f_1-f_2\\
 \beta_2&:=f_2
\end{align*}
The map (see \cite[Table 4]{Vinberg}) to the restricted root system is for $k\geq 4$ 
$$
r(\alpha_i)=\beta_i \text{ for } i=1,2 \text{ and } r(\alpha_i)=0 \text{ otherwise}
$$
For $k=3$, the map is
$$
r(\alpha_1)=\beta_1\hskip 0.2in r(\alpha_2)=r(\alpha_3)=\beta_2
$$
Solving the equations for $e_i$ we find independently of $k$ that
\begin{align*}
r(e_1)&=\beta_1+\beta_2 = f_1\\
r(e_2)&=\beta_2 = f_2\\
r(e_j)&=0 	\text{ otherwise}
\end{align*}

There are two spin representation $V(\varpi_{k-1})$ and $V(\varpi_k)$.
Their weights (see \cite[Thm. 31.2]{Bump}), each with multiplicity one, are
$$
\left\lbrace\frac 12 \left(\pm e_1\pm e_2 \cdots\pm e_k\right)\right\rbrace
$$
The representation $V(\varpi_{k-1})$ contains all summands with an odd number of minus signs, while $V(\varpi_k)$ those with an even number.

Mapping the weights to the restricted root system, we find that each representation has the following weights, each with multiplicity $2^{k-3}=2^{(n-4)/2}$
$$
\left\lbrace\frac 12 (f_1+f_2),\frac 12 (f_1-f_2), -\frac 12 (f_1-f_2), -\frac 12 (f_1+f_2)\right\rbrace
$$
For the standard representation of $\so_{2,2k-2}$ the weights in $B_k$ are
$$
\{e_1,\ldots, e_k, -e_k,\ldots, -e_1\}
$$
Therefore, the restricted weights are (where $0$ occurs with multiplicity $2k-4=n-2$)
$$
\{f_1,f_2,0,\ldots,0,-f_2,-f_1 \}
$$
Applying the discussion concerning Lyapunov exponents corresponding to different representations, the next result follows.

\begin{proposition}
\label{prop:even_spin}
 Suppose $g_t\curvearrowright(X,\mu)$ is a probability space equipped with an ergodic flow.
 Let $V\to X$ be a vector bundle equipped with a metric of signature $(2,2k-2)$ and a lift of the action of $g_t$ preserving the metric.
 Suppose $V$ also carries a positive-definite metric, not invariant under $g_t$, but for which the flow is integrable in the sense of the Oseledets theorem.
 
 Let $W\to X'$ be the vector bundle derived from $V$, after perhaps passing to a finite cover, where the fibers correspond to the direct sum of the two spin representations.
 The structure group changes from $O_{2,2k-2}$ to its spin double cover.
 
 Then the Lyapunov exponents of $(g_t,V)$ are
 $$
 \lambda_1 \geq \lambda_2 \geq 0 \geq \cdots \geq 0 \geq -\lambda_2 \geq -\lambda_1
 $$
 where $0$ occurs with multiplicity $2k-4$.
 
 The Lyapunov exponents of $(g_t,W)$ are
 $$ 
 \frac 12 (\lambda_1+\lambda_2)\geq \frac 12 (\lambda_1 -\lambda_2) \geq -\frac 12 (\lambda_1-\lambda_2) \geq -\frac 12 (\lambda_1+\lambda_2)
 $$
 Each of them occurs with multiplicity $2^{k-2}$.
 Note the dimension of the sum of the two spin representations is $2^k$.
\end{proposition}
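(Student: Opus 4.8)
The plan is to invoke the universal Lyapunov exponent theorem of Kaimanovich recorded above, exactly as in the proof of \autoref{prop:odd_spin}, with the type $D$ root data now playing the role of the type $B$ data. First I would pass to the finite cover $X'$ on which the monodromy of $V$ lifts from $\SO_{2,2k-2}$ to its spin double cover $G_1:=\Spin_{2,2k-2}$; by the discussion under ``Covers'' the Lyapunov exponents are unchanged, so nothing is lost. I then form the principal $G_1$-bundle $P\to X'$ carrying the lifted action of $g_t$. The $L^1$-boundedness assumed for the standard representation $V$ transfers to any fixed finite-dimensional representation of $G_1$: the log-norm of a cocycle value in any such representation is comparable to its Cartan projection, and the standard representation already detects the full Cartan projection since its restricted weights $f_1,f_2$ span $\fraka^\dual$. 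Hence the spin bundle $W$ likewise satisfies the integrability needed for Oseledets.

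With $P$ in hand, Kaimanovich's theorem produces a \emph{single} Lyapunov vector $\Lambda\in\fraka_+$ in the positive Weyl chamber of the restricted root system, which is of type $B_2$ with simple roots $\beta_1=f_1-f_2$ and $\beta_2=f_2$. The decisive point is that this same $\Lambda$ governs every associated bundle simultaneously: for a representation $\rho$ the Lyapunov exponents of $P\times_{G_1}V_\rho$ are the values of the restricted weights $r(\Sigma_\rho)$ on $\Lambda$. This is what will upgrade the relation between the spectra of $V$ and $W$ from a mere inequality to the exact identities claimed.

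Applying this to the standard representation, whose restricted weights were computed above to be $\{f_1,f_2,0,\ldots,0,-f_2,-f_1\}$ with $0$ of multiplicity $2k-4$, I set $\lambda_1:=\langle f_1,\Lambda\rangle$ and $\lambda_2:=\langle f_2,\Lambda\rangle$. The membership $\Lambda\in\fraka_+$ means precisely $\langle\beta_1,\Lambda\rangle\geq 0$ and $\langle\beta_2,\Lambda\rangle\geq 0$, i.e. $\lambda_1-\lambda_2\geq 0$ and $\lambda_2\geq 0$, which gives the ordering $\lambda_1\geq\lambda_2\geq 0\geq\cdots\geq 0\geq -\lambda_2\geq-\lambda_1$ together with the zero exponent of multiplicity $2k-4$.

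Finally, for the sum $W$ of the two spin representations $V(\varpi_{k-1})\oplus V(\varpi_k)$, the restricted weights were found above to be $\tfrac12(f_1+f_2),\tfrac12(f_1-f_2),-\tfrac12(f_1-f_2),-\tfrac12(f_1+f_2)$, each occurring with multiplicity $2^{k-3}$ in each summand, hence $2^{k-2}$ in total. Evaluating against $\Lambda$ yields the four exponents $\tfrac12(\lambda_1+\lambda_2),\tfrac12(\lambda_1-\lambda_2),-\tfrac12(\lambda_1-\lambda_2),-\tfrac12(\lambda_1+\lambda_2)$, each of multiplicity $2^{k-2}$; the ordering is immediate from $\lambda_1\geq\lambda_2\geq 0$, and the dimension check $4\cdot 2^{k-2}=2^k$ matches $\dim(V(\varpi_{k-1})\oplus V(\varpi_k))$. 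I do not anticipate a genuine obstacle, since the representation theory is already in place; the only step needing real care is the integrability transfer of the first paragraph, while the conceptual crux is simply that one Lyapunov vector $\Lambda$ controls both $V$ and $W$.
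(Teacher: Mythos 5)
Your proposal is correct and follows essentially the same route as the paper: the restricted-weight computations for type $D_k$ mapping onto the $B_2$ restricted root system, combined with Kaimanovich's theorem that a single Lyapunov vector $\Lambda\in\fraka_+$ simultaneously governs the exponents of all associated bundles, which is exactly what the paper invokes when it says the result follows from ``the discussion concerning Lyapunov exponents corresponding to different representations.'' Your extra paragraph justifying the transfer of $L^1$-integrability to the spin bundle via the Cartan projection (using that $f_1,f_2$ span $\fraka^\dual$) is a detail the paper leaves implicit, and is a welcome addition rather than a deviation.
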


\section{Top Lyapunov exponent for K3s}
\label{sec:top_exp}

\paragraph{Setup.}
Throughout this section, $C$ is a fixed compact Riemann surface with finitely many punctures $S\subset C$.
Let $H$ be a polarized variation of Hodge structure of K3 type over $C\setminus S$, which is not locally isotrivial.
Recall (\autoref{def:VHS}) this gives a local system $H_\bR$ with a flat symmetric bilinear form $I(-,-)$ of signature $(2+,n-)$, as well as a decomposition of the complexification:
$$
H_\bC = H^{2,0}\oplus H^{1,1}\oplus H^{0,2}
$$ 
Assume that $C\setminus S$ is of hyperbolic type, i.e. carries a complete finite volume hyperbolic metric.
Let $g_t$ denotes the unit speed geodesic flow for this metric, defined on the unit tangent bundle $T^1(C\setminus S)$.
By \autoref{rmk:VHS_bdd} the integrability condition of the Oseledets theorem is automatically satisfied.

The goal of this section is to give two different proofs of the next result.

\begin{theorem}
\label{thm:top_exp}
 With the setup as above, consider the Lyapunov exponents of the cocycle for $g_t$ induced by the local system:
 $$
 \lambda_1\geq \lambda_2 \geq 0 \geq \cdots \geq 0 \geq -\lambda_2\geq -\lambda_1
 $$
 The multiplicity of zero in the spectrum is $n-2$, where the signature of the flat metric is $(2+,n-)$.
 
 Then we have that $\lambda_1>\lambda_2$ and moreover
 $$
 \lambda_1 = \frac 12 \cdot \frac{\deg H^{2,0}}{\deg K_C(\log S) }
 $$
 Here $K_C(\log S)$ denotes the cotangent bundle of $C$ with logarithmic singularities along $S$.
 The line bundle $H^{2,0}$ admits an algebraic extension to all of $C$ and its degree is taken for that extension.
\end{theorem}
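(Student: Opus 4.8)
The plan is to reduce the weight-$2$ problem to the classical weight-$1$ theory through the Kuga--Satake construction, and then to extract $\lambda_1$ from the spin-representation bookkeeping already in place. By \autoref{thm:KS} there is a finite unramified cover $C'\setminus S'\to C\setminus S$ over which $H$ lifts through the spin group to a polarized weight-$1$ variation $\KS(H)$ whose underlying bundle is $\Cl_+(H)$, with structure group $\Spin_{2,n}$ acting through copies of the spin representation. Passing to this cover changes neither the Lyapunov exponents (they are invariant under finite covers) nor the ratio $\deg H^{2,0}/\deg K_C(\log S)$, since both degrees scale by the degree of the cover; so I may work upstairs and transport the final identity back down.

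First I would record the output of \autoref{prop:odd_spin} and \autoref{prop:even_spin}. Writing the K3 spectrum as $\lambda_1\ge\lambda_2\ge 0\ge\cdots\ge -\lambda_2\ge -\lambda_1$, each spin summand contributes the two positive exponent values $\tfrac12(\lambda_1+\lambda_2)$ and $\tfrac12(\lambda_1-\lambda_2)$, which sum to $\lambda_1$. Accounting for the stated multiplicities and for the number of spin summands making up $\Cl_+(H)$, the sum of the positive Lyapunov exponents of $\KS(H)$ equals $2^{n-1}\lambda_1$, uniformly in the parity of $n$.

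Next I would invoke the weight-$1$ rationality formula of Kontsevich--Forni and Eskin--Kontsevich--Zorich cited in the introduction, which in the curvature $-1$ normalization of this paper expresses the sum of positive exponents of a weight-$1$ variation as a universal constant times $\deg\det\KS(H)^{1,0}/\deg K_{C'}(\log S')$. The line-bundle identity $\det\KS(H)^{1,0}\cong (H^{2,0})^{2^{n-1}}$ from \autoref{thm:KS} gives $\deg\det\KS(H)^{1,0}=2^{n-1}\deg H^{2,0}$, so the factor $2^{n-1}$ cancels against the one from the previous paragraph and one is left with $\lambda_1$ equal to that universal constant times $\deg H^{2,0}/\deg K_C(\log S)$. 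The extension of $H^{2,0}$ across the punctures, and the matching of the cusp contributions on the two sides, is handled by the Deligne extension of the Hodge bundle.

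The main obstacle is twofold. First, pinning down that the universal constant is exactly $\tfrac12$ requires care with the curvature $-1$ versus $-4$ normalizations and with the spin multiplicities; this is precisely where the paper's second, integration-by-parts proof is the cleaner route, since a Chern--Weil computation of the degree of the Hodge line bundle $H^{2,0}=F^2$ against the curvature of its Hodge metric produces the constant $\tfrac12$ directly and self-containedly, bypassing the external normalization. Second, the strict inequality $\lambda_1>\lambda_2$ is not a formal consequence of the formula and must be argued separately: using that $H^{2,0}$ is a \emph{line} bundle and that non-isotriviality forces the Kodaira--Spencer map $\sigma:H^{2,0}\to H^{1,1}\otimes\Omega^1_{C}$ to be nonzero, one shows that $H^{2,0}$ is strictly Lyapunov-dominant, which separates $\lambda_1$ from the remainder of the spectrum and yields $\lambda_1>\lambda_2$.
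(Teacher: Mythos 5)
Your main route is the paper's first proof, almost step for step: pass to the Kuga--Satake cover via \autoref{thm:KS}, read off from \autoref{prop:odd_spin} and \autoref{prop:even_spin} that the weight-$1$ variation has exponents $\tfrac12(\lambda_1\pm\lambda_2)$ each with multiplicity $2^{n-1}$, apply the weight-$1$ formula for the sum of positive exponents, and cancel $2^{n-1}$ against $\deg\det \operatorname{KS}(H)^{1,0}=2^{n-1}\deg H^{2,0}$; the remark that covers change neither the exponents nor the ratio of degrees is also the paper's. Your bookkeeping here is correct. The one soft spot in this part is the constant: you leave the weight-$1$ formula with an unspecified ``universal constant'' and defer its evaluation to a Chern--Weil argument you do not carry out. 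The paper closes this within the Kuga--Satake route by citing the precise formula \cite[Eqn.\ 3.11]{EKZ} together with an explicit normalization remark (their curvature $-4$ convention turns their factor $2$ into the factor $\tfrac12$ here), and independently re-derives the constant by the integration-by-parts proof. So this is an acknowledged incompleteness rather than an error, but as written your argument does not actually produce the number $\tfrac12$.

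The genuine gap is the strict inequality $\lambda_1>\lambda_2$. Your proposed mechanism --- non-isotriviality forces the Kodaira--Spencer map to be nonzero, hence $H^{2,0}$ is ``strictly Lyapunov-dominant'' --- is not an argument: no definition or proof of dominance is given, and nonvanishing of the derivative of the period map (Forni-type criteria) yields positivity statements such as $\lambda_1>0$, never simplicity of the top exponent. In fact the implication you want is false at this level of generality: take $V_1$ a non-isotrivial weight-$1$ rank-$2$ variation and $V_2$ a constant one (a fixed elliptic curve), and set $H=V_1\otimes V_2$; this is a non-isotrivial polarized variation of K3 type with nonzero Kodaira--Spencer map, yet its exponents are $\{\mu_1,\mu_1,-\mu_1,-\mu_1\}$, so $\lambda_1=\lambda_2$. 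Separating the top exponent is a statement about the monodromy group, not about the nonvanishing of the period map's derivative, and the paper proves it by entirely different means: either by invoking the simplicity criterion of \cite[Theorem 1]{Eskin_Matheus}, or by observing that $\tfrac12(\lambda_1-\lambda_2)$ is an exponent of the weight-$1$ Kuga--Satake family and applying the results of \cite{zero_exp} to rule out a zero exponent there. You need one of these inputs (or an equivalent hypothesis on the monodromy) to close this step; as it stands, this part of your proposal would fail.
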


\begin{remark}
 There are two degenerate cases of the above theorem, when $n=1$ and $n=2$.
 
 When $n=1$, the spectrum is $\lambda_1>0>-\lambda_1$ and the formula holds.
 
 When $n=2$, the spectrum is $\lambda_1>\lambda_2>-\lambda_2>-\lambda_1$.
 In this case the formula also holds, but $\lambda_2$ can also be computed.
 Indeed, the variation of Hodge structure of K3 type will be the tensor product of two variations of weight $1$.
 This corresponds to the exceptional isomorphism $\so_{2,2}\cong \fraksl_2 \oplus \fraksl_2$.
\end{remark}

\subsection{Top exponent via Kuga-Satake}

Invoking the results obtained in previous sections, the computation in \autoref{thm:top_exp} reduces to the case of weight $1$.

By \autoref{thm:KS}, after passing to a finite cover there is a variation of Hodge structure of weight $1$, denoted $\KS(H)$.
Passing to finite unramified covers does not affect Lyapunov exponents or ratios of degrees of line bundles.

The rank of the local system $\KS(H)$ is $2^{n+1}$ and by \autoref{prop:odd_spin} and \autoref{prop:even_spin}, the exponents which occur in it are
$$ 
\frac 12 (\lambda_1+\lambda_2)\geq \frac 12 (\lambda_1 -\lambda_2) \geq -\frac 12 (\lambda_1-\lambda_2) \geq -\frac 12 (\lambda_1+\lambda_2)
$$
Each occurs with multiplicity $2^{n-1}$.
Applying the formula for the sum of positive exponents from \cite[Eqn. 3.11]{EKZ} it follows that
$$
2^{n-1}\cdot \lambda_1 = 2^{n-1} \cdot \left(\frac 12 (\lambda_1+\lambda_2)+ \frac 12 (\lambda_1 -\lambda_2) \right ) = \frac 12 \cdot \frac{\deg \KS(H)^{1,0}}{\deg K_C(\log S) }
$$
However, by \autoref{thm:KS} the degrees of $H^{2,0}$ and $\KS(H)^{2,0}$ are related by
$$
\deg \left(H^{2,0}\right) = 2^{n-1}\deg \left(\KS(H)^{2,0}\right)
$$
This implies the claimed formula.
The spectral gap inequality is discussed in the next section.
\hfill \qed

\begin{remark}
 In \cite{EKZ}, the authors normalize the hyperbolic metric to have curvature $-4$.
 Their formula thus has a factor of $2$ in front of the ratio of degrees, whereas ours has a factor of $\frac 12$.
 See \cite[3.11]{EKZ} and the discussion following it for a description of their normalization.
\end{remark}

\subsection{Top exponent via integration by parts}

This section contains an alternative approach to the formula for the top exponent.
Just like in the weight $1$ case, one can take a typical (multivalued) flat section and compute its growth by using the curvature of bundle.
There are several intermediate steps which reduce the calculation to this argument.

 \noindent\textbf{Step 1: Spectral gap.}
 There are two methods to establish the inequality $\lambda_1>\lambda_2$.
 One is to invoke the result of Eskin-Matheus \cite[Theorem 1]{Eskin_Matheus}.
 It gives a criterion for simplicity which applies directly in this case. 
 Note this also gives that $\lambda_2>0$.
 
 Another method is to note that in the Kuga-Satake construction, the second exponent is $\lambda_1-\lambda_2$.
 Now, the arguments from \cite{zero_exp} apply to the Kuga-Satake family and show that one cannot have a zero exponent in the weight $1$ case.
 
 \noindent{\textbf{Step 2: Reducing to a single vector.}} 
 Denote by 
 $$
 g^{\theta}_t:T^1(C\setminus S) \to T^1(C\setminus S)
 $$
 the geodesic flow on the unit tangent  bundle of $C\setminus S$.
 The parameter $\theta$ indicates conjugation by a rotation of angle $\theta\in [0,2\pi)$.
 We thus have a measurable Oseledets decomposition which is $g_t$-invariant:
 $$
 E^{\lambda_1}\oplus E^{[\lambda_2,-\lambda_2]}\oplus E^{-\lambda_1}
 $$
 The middle term has a further refinement, which we don't need.
 It is essential, however, that $\lambda_1>\lambda_2$.
 
 Consider the dynamics on the bundle of Grassmanians of isotropic lines in the local system $H$.
 Denote it by $\Gr_I(1,H)$ and its fiber at a point $x$ by $\Gr_I(1,H_x)$.
 From the spectral gap, for all $x$, for a.e. $\theta$ and Lebesgue-a.e. isotropic vector $\phi_x$ such that $[\phi_x]\in \Gr_I(1,H_x)$ we have
 $$
 \lim_{T\to \infty} \frac 1T \log \norm{g_T^\theta \phi_x} = \lambda_1
 $$
 The above formula fails only when $\phi_x$ is contained in $E^{[\lambda_2,-\lambda_2]}\oplus E^{-\lambda_1}$ and this is a proper (algebraic) subset of $\Gr_I(1,H_x)$.
 
 The key claim is that \emph{for any} $x\in C\setminus S$ and \emph{any isotropic} $\phi_x\in H_x$, we have
 \begin{align}
 \label{eqn:Lyap=avg}
 \lambda_1 = \lim_{T\to \infty} \frac 1 {T} \int_{0}^{2\pi}  \log \norm{g_T^\theta \phi_x} \frac  {d\theta }{2\pi}
 \end{align}
 For this, let $\eta$ be some Lebesgue-class probability measure on $\Gr_I(1,H_x)$.
 Then an $\eta$-typical space is Lyapunov-regular, i.e. the Oseledets theorem holds.
 So for a fixed $\theta\in[0,2\pi)$ we have
 $$
 \lambda_1 = \lim_{T\to \infty} \frac 1 T \int \log \norm{g_T^\theta v} \,d\eta(v)
 $$
 We can now average in $\theta$ and exchange the order of integration and limits:
 \begin{align*}
  \lambda_1 &= \lim_{T\to \infty}  \int_0^{2\pi} \left( \frac 1 T \int \log \norm{g_T^\theta v}\,d\eta(v)\right) \frac{d\theta}{2\pi}\\
  & = \int \left( \lim_{T\to \infty} \frac 1 T \int_0^{2\pi} \log \norm{g_T^{\theta} v}  \frac{d\theta}{2\pi} \right) d\eta(v)
 \end{align*}
 The exchanges are allowed because in this case, we have a uniform bound on the growth of norms, so $\frac 1 T({\log \norm{g_T v}})$ is uniformly bounded.
 
 In the next step, we show that the inner integral above, which equals the one in \autoref{eqn:Lyap=avg}, is independent of the choice of isotropic vector.
 So the extra averaging in $\eta$ is unnecessary, whence the claim.
 
 \noindent{\textbf{Step 3: Integration by parts.}}
 To proceed, take the universal cover of $C\setminus S$, which is the hyperbolic unit disk:
 $$
 \bD \to C \setminus S
 $$
 Fix some $x\in \bD$ and choose $\phi_x\in H_x$, a real isotropic vector in the fiber of $H$ over $x$.
 Extend using the Gauss-Manin connection $\phi_x$ over $\bD$ to a global flat section denoted $\phi$.
 The section is pointwise isotropic on $\bD$, since $\phi_x$ is isotropic, and the flat connection preserves the indefinite form.
 
 We have the decomposition of $\phi$ into types:
 $$
 \phi=\phi^{2,0}\oplus \phi^{1,1}\oplus \phi^{0,2}
 $$
 The isotropy condition gives
 $$
 \norm{\phi^{2,0}}^2 + \norm{\phi^{0,2} }^2 = \norm{\phi^{1,1}}^2
 $$
 Using further that $\phi$ is real, we find $\norm{\phi}^2 = 4\norm{\phi^{0,2}}^2$.
 The norm $\norm{-}$ is for the positive-definite inner product.
 
 We now rewrite \autoref{eqn:Lyap=avg} as
 $$
 \lambda_1 = \lim_{T\to \infty} \frac 1 {2T} \int_0^{2\pi}  \log\left( \norm{g_T^\theta \phi_x}^2 \right) \frac{d\theta}{2\pi}
 $$
 Next we use integration by parts:
 $$
 \log \left( \norm{g_T^\theta \phi_x}^2 \right) = \int_0^T \frac{d}{dt} \log \left( \norm{g_t^\theta \phi_x}^2 \right) dt - \log \left( \norm{\phi_x}^2 \right)
 $$
 The term $\log \norm{\phi_x}^2$ goes away after dividing by $T$ and letting $T\to \infty$.
 For fixed $t\in[0,T]$ apply Green's theorem on the hyperbolic disc $\bD_t\subset \bD$ of (hyperbolic) radius $t$: 
 $$
 l_t \int_0^{2\pi} \frac{d}{dt} \log\left(\norm{g_t^\theta\phi_x }^2\right) \frac{d\theta}{2\pi} 
 = 
 \int_{\bD_t} \left[\Laplace_\hyp \log\left( \norm{\phi}^2 \right) \right] d\Vol_{\hyp}
 $$
 Above, $l_t$ denotes the hyperbolic length of the boundary of the disc $\bD_t$.
 The hyperbolic laplacian is denoted $\Laplace_\hyp$ and the hyperbolic volume form by $d\Vol_\hyp$.
 
 For any function $f$ we have
 $$
 \left(\Laplace_\hyp f\right) d\Vol_\hyp = \sqrt{-1}\del\delbar f
 $$
 For a holomoprhic section $\phi$ of a line bundle $\cL$, we have the following relation to the curvature $\Omega$ of $\cL$:
 $$
 \del \delbar \log\left( \norm{\phi}^2 \right) = -\frac{ \ip{\Omega\phi}{\phi} }{\norm{\phi}^2}
 $$
 Indeed, this can be found, for example, in \cite[Lemma 3.1]{sfilip_ssimple}; the second term vanishes since $\phi$ is a section of a line bundle.
 Note that the right hand side above is independent of $\phi$.
 
 Combining all of the facts above, it follows that
 $$
 \lambda_1 = \lim_{T\to \infty} \frac{1}{2T}\int_{0}^T \left( \int_{\bD_t} -\sqrt{-1}\frac{ \ip{\Omega \phi^{0,2}}{\phi^{0,2}}} {\norm{\phi^{0,2}}^2 }\right) \frac{dt}{l_t}
 $$
 Note that $\norm{\phi}^2$ was replaced by $4\norm{\phi^{0,2}}^2$, but after taking logs in the limit $T\to \infty$ the factor of $4$ goes away.
 
 The formula above used a base-point $x\in \bD$, which can be averaged out.
 To this end, define
 $$
 A_T(x) = \frac 1{2T} \int_0^T \left( \int_{\bD_t(x)} \Phi(z)d\Vol_\hyp(z) \right)\frac {dt}{l_t} 
 $$
 Here $\bD_t(x)$ is the disc of hyperbolic radius $t$ centered at $x$, and $\Phi$ is defined by
 $$
 \Phi(z)d\Vol_\hyp(z) = -\sqrt{-1} \frac{ \ip{\Omega\phi^{0,2}}{\phi^{0,2}} }{\norm{\phi^{0,2}} }
 $$
 Note that $\Phi$ is in fact independent of the choice of holomorphic section $\phi$, and descends to $C\setminus S$.
 
 Next, average the quantity $A_T(x)$ for $x$ in a fundamental domain of the universal cover $\bD\to C\setminus S$.
 The interior integral in the definition of $A_T(x)$, when averaged, becomes:
 $$
 \Vol_\hyp(\bD_t) \int_{C\setminus S}\Phi(z)d\Vol_\hyp(z)
 $$
 We therefore find that
 \begin{align}
 \label{eqn:hyp_limit}
 \lambda_1 = \frac{\int_{C\setminus S} \Phi(z)d\Vol_\hyp(z) } {\int_{C\setminus S}1d\Vol_\hyp} \cdot
 \lim_{T\to \infty} \frac{1}{2T} \int_0^T \frac {\Vol_\hyp(\bD_t)} {l_t} dt
 \end{align}
 To compute the limit above, recall in the unit disc model the metric is $ds^2=\frac{4|dz|^2}{(1-|z|^2)^2}$.
 The disc of hyperbolic radius $t$ corresponds to the Euclidean disc of radius $r$ with
 $$
 \log\left(\frac {1+r}{1-r}\right)=t
 $$
 Computing the appropriate integrals, we find
 \begin{align*}
  l_t &= \frac{4\pi r}{1-r^2}\\
  \Vol_\hyp\left( \bD_t \right)&= \frac{4\pi r^2}{1-r^2}
 \end{align*} 
 Thus the ratio of the hyperbolic volume of the disc to the hyperbolic length of its boundary tends to $1$ as the hyperbolic radius approaches infinity.
 Thus in \autoref{eqn:hyp_limit} the factor corresponding to the limit is $\frac 12$. 
 
 The ratio of integrals appearing in \autoref{eqn:hyp_limit} is also equal to the ratio of degrees of line bundles.
 Indeed, for a line bundle $\cL$ we have the Chern class in terms of curvature as
 $$
 c_1(\cL) = \frac{\sqrt{-1}}{2\pi} \left[\Omega_\cL \right]
 $$
 Following Mumford \cite{Mumford}, the degree of a line bundle can be computed using a singular hermitian metric.
 For the log-canonical, the degree can be computed using the complete hyperbolic metric. 
 The bundle $H^{2,0}$ has a canonical algebraic extension across the punctures, and the Hodge metric provides again a good metric in the sense of Mumford.
 
 To account for the negative sign in the definition of $\Phi(z)$ and the curvature of $H^{0,2}$, note that $\deg H^{0,2}=-\deg H^{2,0}$.
 This leads to the formula
 $$
 \lambda_1 = \frac 12 \cdot \frac{\deg H^{2,0}}{\deg K_C(\log S) }
 $$
\hfill \qed

\section{Examples}
\label{sec:examples}

This section contains some explicit examples of families of K3 surfaces.
The examples are all projective and come with a natural (quasi)-polarization.
They are determined by the degree of the polarization, and are as follows:
\begin{description}
 \item[degree $2$] A double cover of $\bP^2$ ramified over a sextic curve.
 \item[degree $4$] A quartic surface in $\bP^3$.
 \item[degree $6$] The intersection of a quadric and cubic hypersurfaces in $\bP^4$.
 \item[degree $8$] The intersection of three quadric hypersurfaces in $\bP^5$.
\end{description}
\begin{remark}
 Each of the above examples gives a family with $19$ parameters.
 We discuss in detail the example of quartic surfaces, for the others only providing the numerical data, which is taken from the work of Maulik-Pandharipande \cite{Maulik_Pandharipande}.
\end{remark}

\paragraph{Genericity.}
 Below, the arguments apply to generic polynomials.
 However, unlike the situation in ergodic theory where ``generic" often means no concrete example is possible, the opposite is true.
 Writing down polynomials with integer coefficients by hand, a computer algebra system can certify that they are ``generic" in the sense used below. 

\paragraph{Topological Invariance.}
It is also interesting to note that a perturbation of the parameters leads to the same top Lyapunov exponent.
In the examples below, the family will be over a punctured sphere.
While the monodromy matrices do not change under small perturbations, the position of the punctures will.
This changes the hyperbolic structure and hence the geodesic flow, but not the top Lyapunov exponent.

\paragraph{Monodromy.}
All examples below are given as Lefschetz pencils of complete intersections.
This implies that their monodromy has full Zariski closure in the orthogonal group.
The survey of Peters and Steenbrink \cite[Sect. 2]{Peters_Steenbrink} provides a discussion.

\subsection{Quartics in \texorpdfstring{$\mathbb{P}^3$}{P3} }

Fix two generic homogeneous polynomials $F,G$ of degree $4$ in the variables $X_0,\ldots,X_3$.
One can build a family of K3 surfaces over $\bP^1$ by defining for $[\lambda:\mu]\in \bP^1$ the surface
$$
X_{[\lambda:\mu]} = \{\lambda F + \mu G = 0\}\subset \bP^3
$$
The number of points where the fiber is singular will follow from an analysis of the space of all quartic surfaces.

\paragraph{The universal family of hypersurfaces.}
Homogeneous polynomials of degree $d$ on $\bP^n$ are parametrized by $\bP^{N}$ where $N=\tbinom{n+d}{d}-1$.
Define
$$
\Sigma:= \left\lbrace(x,F) \in \bP^n\times \bP^N \middle\vert F(x)=0 \right\rbrace
$$
It carries the projections $\pi_1:\Sigma\to \bP^n$ and $\pi_2:\Sigma \to \bP^N$.
Denote the co\-ho\-mology rings of the projective spaces by
\begin{align*}
 H^\bullet(\bP^n)& = \bZ[a]/\{a^{n+1}=0\}\\
 H^\bullet(\bP^N)& = \bZ[b]/\{b^{N+1}=0\}
\end{align*}
For a variety $X$, denote its cohomology class by $[X]$ (it is obtained as the \Poincare dual of the fundamental cycle).
Then $[\Sigma]=d\cdot (a\otimes 1) + 1\otimes b$.

To determine the polynomials which give singular hypersurfaces, define
$$
\Sigma_i =\left\lbrace (x,F)\in \bP^n\times \bP^N \middle\vert \partial_{x_i} F(x)=0 \right\rbrace
$$
Then $[\Sigma_i]=(d-1)a\otimes 1  + 1\otimes b$.
The singular locus is the intersection of the $\Sigma_i$:
$$
\Sigma_0 \cap \cdots \cap \Sigma_n = \left\lbrace (x,F) \vert \partial_{x_i}F(X)=0 \hskip 0.1in \forall i=0\ldots n\right\rbrace
$$
Its cohomology class is then
$$
[\Sigma_0\cap\cdots\cap \Sigma_n] = \left( (d-1)a\otimes 1 + 1\otimes b \right)^{n+1}
$$
We are interested in the discriminant locus $\cD\subset \bP^N$ given by
$$
\cD:= (\pi_2)_*\left( \Sigma_0\cap\cdots\cap \Sigma_n  \right)
$$
To determine $[\cD]$, take the coefficient of $a^n\otimes 1$ in the class of the intersection of the $\Sigma_\bullet$.
This yields
$$
[\cD] = (n+1) (d-1)^n b\in H^\bullet (\bP^N)
$$
So $\cD$ is a hypersurface of degree $(n+1)(d-1)^n$.
The case relevant to degree $4$ (i.e. quartic) surfaces in $\bP^3$ has degree of the discriminant locus equal to $108$.

\paragraph{Lefschetz pencils.}
Take two generic homogeneous polynomials $F,G$ of degree $4$ in the variables $X_0,\cdots, X_3$.
Define the family
$$
X=\left\lbrace ([\lambda:\mu],x)\in \bP^1\times \bP^3\vert \lambda F(x) + \mu G(x)=0 \right\rbrace
$$
By the above degree calculation, the fibration $X\to \bP^1$ will have $108$ singular fibers, with nodal singularities.

\paragraph{The Hodge bundle.}
We now compute the bundle over $\bP^1$ whose fiber over $t\in \bP^1$ is $H^0(K_{X_t})$.
Here $X_t$ denotes the fiber of $X\to \bP^1$ over $t$, and $K_{X_t}$ is its canonical bundle.
Note that in order to define it over the singular fibers, one can work instead with $H^2(\cO_{X_t})$.

The variety $X$ is equipped with two natural maps.
The projection $\alpha:X\to \bP^1$ exhibits it as a fibration, while $\beta :X\to \bP^3$ presents $X$ as the blow-up of $\bP^3$ along the curve $\{F=0\}\cap \{G=0\}$.
Let $E\subset X$ be the exceptional divisor of the blow-up, so that
$$
K_X =\beta^* K_{\bP^3} + E
$$
Letting $H$ denote the class of a plane in $\bP^3$ or its pullback to $X$, we thus have
$$
K_X = -4H + E
$$
The class of a fiber $X_t$ is $4H-E$, since this is the strict transform of a quartic in the pencil.
Next, $K_{\bP^1}=-2[pt.]$, therefore 
$$
\alpha^* K_{\bP^1}=-2 (4H-E)
$$
So the relative canonical sheaf is given by
$$
K_{X/\bP^1} = K_X - \alpha^* K_{\bP^1} = 4H-E
$$
In particular $K_{X/\bP^1} = \alpha^*(\cO_{\bP^1}(1))$ so
$$
H^{2,0} = \alpha_*(K_{X/\bP^1}) = \cO_{\bP^1}(1)
$$

\paragraph{Top Lyapunov exponent.}
To summarize, we have a family $X\to \bP^1$ which is smooth away from $108$ points, denoted $S$.
The Hodge bundle was calculated as $H^{2,0}=\cO_{\bP^1}(1)$, and so
\begin{align*}
 \deg K_{\bP^1}(\log S) &= 108-2 = 106\\
 \deg H^{2,0} &= 1\\
 \lambda_1 &= \frac 1 {212}
\end{align*}\

\begin{remark}
 There is a variation of the above construction, for which we refer to \cite[Sect. 5.1]{Maulik_Pandharipande}.
 Take $C_{53}\to \bP^1$ to be a degree $2$ hyperelliptic cover ramified over the $108$ points corresponding to nodal K3s (so, of genus $53$).
 The pulled-back family $\tilde{X}\to C_{53}$ admits a minimal desingularization, so that all the fibers are smooth K3 surfaces.
 We then have $\deg H^{2,0} = 2$ and $\deg K_{C_{53}}=104$, so the top exponent is $\lambda_1 = \frac 1 {104}$.
\end{remark}

\subsection{Other classical examples}

The calculations for the next examples are similar to the one above, although a bit more involved (so, omitted).
The numerical values are taken from \cite[Sect. 6]{Maulik_Pandharipande}.

\paragraph{Intersection of a quadric and cubic in $\bP^4$.}
Pick generic homogeneous polynomials $F$ of degree $2$ and $G$ of degree $3$ in the variables $X_0,\ldots,X_4$.
The intersection of their zero loci gives a smooth $K3$ surface.
Two possible kinds of Lefschetz pencils are obtained by letting $F$ vary and keeping $G$ fixed, or vice-versa.
\begin{description}
 \item[Case 1] Pick generic $F_1,F_2$ of degree $2$ and let
 $$
 X_{[\lambda:\mu]} = \{ \lambda F_1 +\mu F_2 = 0 \} \cap \{G=0\}
 $$
 This determines a fibration over $\bP^1$ with $7$ nodal fibers and $\deg H^{2,0}=1$, therefore $\lambda_1 = \frac 1 {10}$.
 \item[Case 2] Pick generic $G_1,G_2$ of degree $3$ and let
 $$
 X_{[\lambda:\mu]} = \{ F=0 \} \cap \{\lambda G_1 + \mu G_2 =0  \}
 $$
 The fibration has $98$ singular fibers and $\deg H^{2,0}=1$, therefore $\lambda_1 = \frac 1{192}$.
\end{description}

\paragraph{Intersection of three quadrics in $\bP^5$.}
Let $F_1,F_2,F_3$ be generic homogeneous degree $2$ polynomials in $X_0,\ldots , X_5$.
The intersection of their zero loci gives a smooth K3 surface.
We let the last one vary to obtain a family
$$
X_{[\lambda:\mu]} = \{F_1=0\}\cap \{F_2=0\}\cap \{\lambda F_3 + \mu F_4 =0\}
$$
This family over $\bP^1$ has $80$ nodal fibers and $\deg H^{2,0}=1$, therefore $\lambda_1 = \frac 1 {156}$.

\paragraph{Double cover of a sextic in $\bP^2$.}
Let $F(X_0,X_1,X_2)$ be a generic homogeneous polynomial of degree $6$.
The equation
$$
y^2 = F(X)
$$
determines a double cover of $\bP^2$ ramified along the locus $\{F=0\}$.
Let $F$ vary in a pencil to get a family
$$
X_{[\lambda:\mu]} = \{ y^2 = \lambda F_1 + \mu F_2 \}
$$
There are $150$ nodal fibers and $\deg H^{2,0}=1$, therefore $\lambda_1 = \frac 1 {294}$.

\subsection{Fermat and Dwork families}

We now work out an example of a family of K3 surfaces related to mirror symmetry.
The period map and the corresponding Picard-Fuchs equations have been computed by Hartmann \cite{Hartmann}.
The thesis of Smith \cite{Smith} provides another large set of examples which can be worked out.

\paragraph{Fermat pencil.}
Consider the family of quartic surfaces $X_t\subset \bP^3$ depending on a parameter $t\in \bA^1\subset \bP^1$ given by
$$
X_0^4+X_1^4+X_2^4+X_3^4 - 4t X_0 X_1 X_2 X_3 = 0
$$
This family has singular fibers at $t=\infty$ and the roots of unity $\{\zeta\vert \zeta^4=1\}$.
Let $G = (\bmu_4)^4$ be a group with four copies of the $4$th roots of unity.
Letting each copy of $\bmu_4$ act on a homogeneous coordinate of $\bP^3$, the subgroup
$$
G_0 = \{(\lambda_0,\lambda_1,\lambda_2,\lambda_3)\in (\bmu_4)^4 \vert \lambda_0\lambda_1\lambda_2\lambda_3 = 1\}
$$
preserves each surface $X_t$.
Taking the quotient by the action of $G_0$ gives a family $Y_t\to \bP^1$ whose fibers are singular quartic surfaces.

The ring of invariants for the action of $G_0$ is generated by the five monomials
\begin{gather*}
 Y_0=X_0^4 \hskip 1em Y_1=X_1^4 \hskip 1em Y_2 =X_2^4 \hskip 1emY_3=X_3^4\\
 Y_4 = X_0 X_1 X_2 X_3
\end{gather*}
This gives an embedding of $Y_t=X_t/G_0$ into $\bP^4$ as the family
\begin{align*}
 Y_0 + Y_1 + Y_2 + Y_3 - 4t Y_4 &= 0\\
 Y_0 Y_1 Y_2 Y_3 - Y_4^4 &=0
\end{align*}
A general fiber has $6$ singularities of type $A_3$, a fiber over $t^4=1$ has an additional $A_1$ singularity, and the fiber at infinity is a union of $4$ planes.

\paragraph{The Dwork family.}
The fibers of the family $Y_t \to \bP^1$ have a simultaneous desingularization $\tilde{Y}_t\to \bP^1$ called the Dwork family.
It is obtained by a minimal desingularization of the general fibers.
The remaining singular fibers are again only over $t^4=1$ and $\infty$.

The period map of the Dwork family is computed by Hartmann \cite{Hartmann} and depicted in \autoref{img:per_map}.
The monodromy of the Dwork family acts non-trivially only on a real $3$-dimensional piece, on which the metric has signature $(2+,1-)$.
The corresponding period domain is isomorphic to the upper half-plane.

\begin{figure}[ht]
 \centering
 \includegraphics[keepaspectratio=true, width=\textwidth]{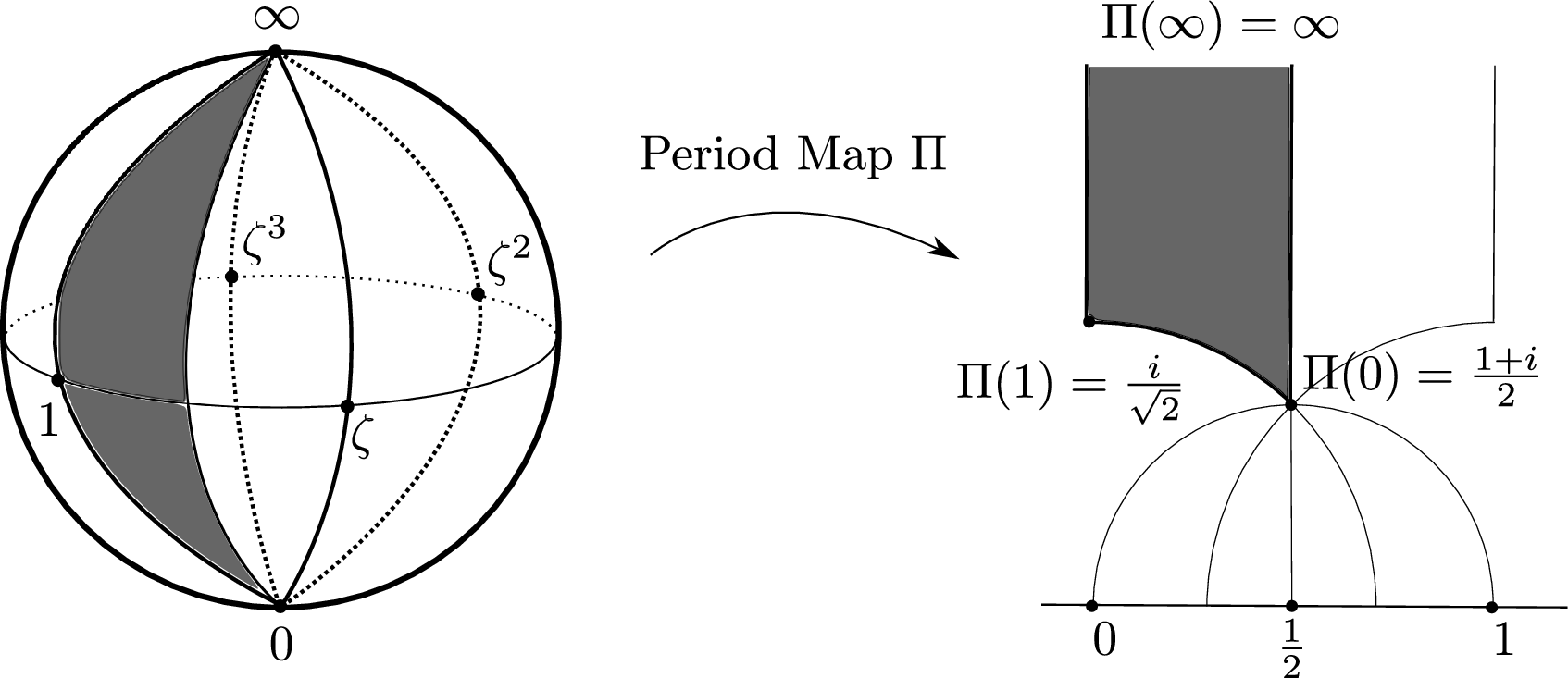}
 \caption{On the wedge of sphere on the left, the period map is a Schwarz triangle mapping.}
 \label{img:per_map}
\end{figure}
To compute the degree of the Hodge bundle, it suffices to integrate the curvature form.
In this case, because the period map is given by Schwartz triangle mappings (on appropriate charts), it suffices to sum the area of the corresponding (eight) hyperbolic triangles.
We have
$$
\int_{\bP^1\setminus \{t^4=1\} \cup \infty} \Omega_{H^{1,0}}
= 8\int_{0}^{1/2} \int_{\sqrt{1/2-x^2}}^\infty \frac {dx\, dy}{y^2} = 2\pi
$$
Since the hyperbolic area of the punctured $\bP^1$ in this case is $6\pi$, we find
$$
\lambda_1 = \frac 12 \cdot \frac{2\pi}{6\pi} = \frac 16
$$
Note that in this case $\lambda_2=0$, since the monodromy is highly reducible.

\subsection{Maximal Lyapunov exponent and Kummer K3s}
\label{subsec:max_exp}

Given a variation of Hodge structures over a punctured curve $C\setminus~S$, Arakelov inequalities give apriori bounds on the degrees of Hodge bundles (see e.g. the work of Viehweg and Zuo \cite{Viehweg}).
For variations of K3 type, we have
\begin{align}
\label{eqn:Arakelov_ineq}
 \deg H^{2,0} \leq \deg K_C(\log S)
\end{align}
According to the \autoref{thm:top_exp}, this implies the top Lyapunov exponent is bounded above by $\frac 12$.
The work of Sun, Tan, and Zuo \cite{MaxK3} describes the situations when equality occurs.

\paragraph{Kummer K3s.}
Given a torus $A$ of complex dimension two, the Kummer construction associates to it a K3 surface $K(A)$ as follows.
Let $A_0=A/\{\pm 1\}$ be the quotient of $A$ by the involution $x\mapsto -x$ on $A$.
Then $K(A)$ is defined as the blow-up of $A_0$ at the $2^4=16$ singular points of $A_0$.

This construction also works in families.
Given a family $A\to C$ of abelian surfaces over a curve $C$, let $K(A)\to C$ be the associated family of Kummer K3s.

The simplest abelian surfaces are the product of two elliptic curves.
Given two families $E_{(i)}\to C$ of elliptic curves, let $K(E_{(1)}\times E_{(2)})\to C$ be the associated family of Kummer K3s.
If $H^{p,q}_{(i)}$ denotes Hodge bundles of the family $E_{(i)}$, then the Hodge bundle $H^{2,0}$ of the Kummer family satisfies
$$
H^{2,0} = H^{1,0}_{(1)}\otimes H^{1,0}_{(2)}
$$
The isomorphism works not just at the level of bundles, but also in the cohomology of the fibers (see \cite[Prop. 4.3]{Morrison}).

Note that the monodromy of the family has a large constant part (at least coming from the blow-up divisors, but in general more).
Denote by $T_{K(E_{(1)}\times E_{(2)})}$ the irreducible part (also called the ``transcendental part").
If the elliptic curves in $E_{(1)}$ and $E_{(2)}$ are isogenous over each point in $C\setminus S$, then the transcendental part is $3$-dimensional.

\paragraph{Maximal Lyapunov exponent.}
Suppose now that $H=H^{2,0}\oplus H^{1,1}\oplus H^{0,2}$ is an irreducible variation of Hodge structure of K3 type over a Riemann surface $C\setminus S$.
Suppose that its top Lyapunov exponent is $\frac 12$, i.e. we have equality in \autoref{eqn:Arakelov_ineq}.
Theorem 0.1 and Lemma 1.1 from \cite{MaxK3} describe this situation.

Equality occurs if and only if $H$ is the transcendental part of a family of Kummer K3s coming from a product of elliptic curves.
Moreover, the families of elliptic curves $E_{(i)}$ must be modular (i.e. pulled back via covering maps $C\setminus S \to \bH/\SL_2(\bZ)$).
In particular, the fibers over any given point of $C\setminus S$ are isogenous.



\appendix


\bibliographystyle{sfilip}
\bibliography{K3_Lyapunov}
\end{document}